\documentclass[11pt, reqno]{amsart}
\textwidth140truemm
\textheight197truemm
\usepackage{graphicx,psfrag}
\usepackage[psamsfonts]{amssymb}
\usepackage{pdfsync}
\usepackage{amscd}
\usepackage{amsmath}
\usepackage{amsxtra}
\usepackage{mathrsfs}
\usepackage{stmaryrd}
\usepackage[small,nohug,heads=littlevee]{diagrams}
\usepackage{comment}
\usepackage{enumerate}
\usepackage{hyperref}
\usepackage{xfrac}
\diagramstyle[labelstyle=\scriptstyle]

\usepackage{mathpple}

\usepackage[width=5.8in, height=8.5in, bottom=1.3in, centering]{geometry}

\theoremstyle{plain}
    \newtheorem{theorem}{Theorem}[section]
    \newtheorem{lemma}[theorem]{Lemma}
    \newtheorem{corollary}[theorem]{Corollary}
    \newtheorem{proposition}[theorem]{Proposition}

\theoremstyle{definition}
    \newtheorem{definition}[theorem]{Definition}

    \newtheorem*{thank}{Acknowledgements}
\theoremstyle{remark}
    \newtheorem{remark}[theorem]{Remark}

    \newtheorem*{digression}{Digression}

\numberwithin{equation}{section}

\newcommand{\NN}{\mathbb{N}}

\newcommand{\Conn}{\mathfrak{Conn}}

\newcommand{\real}{\mathbb{R}}
\newcommand{\complex}{\mathbb{C}}

\newcommand{\A}{\mathcal{A}}

\newcommand{\F}{\mathcal{F}}

\newcommand{\AOO}{\A^{0,0}}

\newcommand{\AOD}{\A^{0,\bullet}}
\newcommand{\Homsheaf}{\mathscr{H}om}

\newcommand{\PA}{\mathcal{P}_{A}}

\newcommand{\gext}{\operatorname{Ext}}

\newcommand{\Hom}{\operatorname{Hom}}
\newcommand{\End}{\operatorname{End}}
\newcommand{\Id}{\operatorname{Id}}
\newcommand{\dbar}{\overline{\partial}}
\newcommand{\expmap}{\operatorname{exp}}
\newcommand{\Expmap}{\operatorname{Exp}}
\newcommand{\normal}{N}
\newcommand{\conormal}{N^\vee}

\newcommand{\Dcoh}{\mathcal{D}^b_{coh}}

\newcommand{\Cinf}{$C^\infty$}

\newcommand{\aaa}{\mathfrak{a}}
\newcommand{\bbb}{\mathfrak{b}}
\newcommand{\Diag}{\Delta\formal}
\newcommand{\Diagfinite}{\Delta^{\upscript{(r)}}}
\newcommand{\Xdiag}{X_{X \times X}\formal}
\newcommand{\Xdiagfinite}{X^{(r)}_{X \times X}}
\newcommand{\Shat}{\hat{S}}

\newcommand{\Sh}{\operatorname{Sh}}
\newcommand{\Aut}{\operatorname{Aut}}
\newcommand{\Spf}{\operatorname{Spf}}
\newcommand{\GL}{\mathbf{GL}}

\newcommand{\JetX}{\mathcal{J}^\infty_X}
\newcommand{\JetXfinite}{\mathcal{J}^r_X}
\newcommand{\graded}{\operatorname{gr}}
\newcommand{\Lie}{\mathcal{L}}
\newcommand{\EuclideanConn}{\nabla_\mathfrak{E}}

\newcommand{\upscript}[1]{{\scriptscriptstyle{#1}}}

\newcommand{\Yhat}{{\hat{Y}}}
\newcommand{\Yhatfinite}{{\hat{Y}^{\upscript{(r)}}}}

\newcommand{\Osheaf}{\mathscr{O}}

\newcommand{\Asheaf}{\mathscr{A}}
\newcommand{\Isheaf}{\mathscr{I}}

\newcommand{\formal}{^{\upscript{(\infty)}}}

\newcommand{\Linf}{L_\infty}

\newcommand{\proj}{\operatorname{pr}}

\newarrow{Equal} =====

\title[The Dolbeault dga of the formal neighborhood of the diagonal]
{The Dolbeault dga of the formal neighborhood of the diagonal}

\author{Shilin Yu}
\address{Department of Mathematics, University of Pennsylvania, PA 19104-6395, USA
}
\email{shilinyu@math.upenn.edu}


\keywords{Formal neighborhood, Jet Bundle, Differential graded algebra, Formal geometry, Atiyah class, $L_\infty$-algebra}

\thanks{This research was partially supported by the grant DMS1101382  from the National Science Foundation.}

\dedicatory{Dedicated to my parents}

\subjclass[2010]{Primary 14B20; Secondary 16E45, 58A20}

\begin{document}

\begin{abstract}
  A well-known theorem of Kapranov states that the Atiyah class of the tangent bundle $TX$ of a complex manifold $X$ makes the shifted tangent bundle $TX[-1]$ into a Lie algebra object in the derived category $D(X)$. Moreover, he showed that there is an $\Linf$-algebra structure on the shifted Dolbeault resolution $(\A^{\bullet-1}_X(TX),\dbar)$ of $TX$ and wrote down the structure maps explicitly in the case when $X$ is K\"ahler. The corresponding Chevalley-Eilenberg complex is isomorphic to the Dolbeault resolution $(\AOD_X(\JetX),\dbar)$ of the jet bundle $\JetX$ via the construction of the holomorphic exponential map of the K\"ahler manifold.  In this paper, we show that $(\AOD_X(\JetX),\dbar)$ is naturally isomorphic to the Dolbeault dga $(\A^\bullet(\Xdiag),\dbar)$ associated to the formal neighborhood of the diagonal of $X \times X$ which we introduced in \cite{DolbeaultDGA}. We also give an alternative proof of Kapranov's theorem by obtaining an explicit formula for the pullback of functions via the holomorphic exponential map, which allows us to study the general case of an arbitrary embedding later.
\end{abstract}

\maketitle

\tableofcontents

\section{Introduction}

In this paper, we continue the study of the Dolbeault differential graded algebra (dga) of the formal neighborhood of a closed embedding of complex manifolds introduced in \cite{DolbeaultDGA}. One of initial motivations of the overall project is to give a concrete description of this Dolbeault dga in terms of the differential geometry of the embedding. As a starting point, the current paper provides a reformulation of the work of Kapranov \cite{Kapranov} on diagonal embeddings. Our new perspective will allow us to deal with the general case later elsewhere.

In \cite{DolbeaultDGA} we defined, for the formal neighborhood $\Yhat$ of a closed embedding of complex manifolds $i: X \to Y$, a Dolbeault dga $A=(\A^\bullet(\Yhat),\dbar)$. Following the work of Block \cite{Block1}, we assigned to $A$ a dg-category $\PA$ and proved that, in the case when $X$ is compact, $\PA$ serves as a dg-enhancement of the derived category $\Dcoh(\Yhat)$ of coherent sheaves over the formal neighborhood $\Yhat$. 

In order to construct explicitly certain objects we are interested in and compute the hom complexes between them in the dg-category $\PA$ (see the introductory part of \cite{DolbeaultDGA}), it is necessary to have a geometric description of the Dolbeault dga and, in particular, its differential. This is the problem that we shall study here and in subsequent papers.

In \cite{Kapranov}, Kapranov considered the case of a diagonal embedding $\Delta: X \hookrightarrow X \times X$. He argued that the holomorphic structure of the formal neighborhood $\Xdiag$ of the diagonal is encoded in the Dolbeault resolution $(\AOD(\JetX),\dbar)$ of the jet bundle $\JetX$. In \cite{DolbeaultDGA}, we generalized this observation to an arbitrary closed embedding $i: X \hookrightarrow Y$ and defined a general notion of the Dolbeault complex (or dga) $(\A^\bullet(\Yhat),\dbar)$ of the formal neighborhood $\hat{Y}$. We shall show in Section \ref{subsec:diag_jet} that our general definition indeed gives the Dolbeault resolution of the jet bundle when specialized to the case of a diagonal embedding.

Under the assumption that $X$ is K\"ahler, Kapranov used the Levi-Civita connection to construct a fiberwise holomorphic exponential map
\begin{equation}\label{eq:intro_exp}
  \expmap: X\formal_{TX} \to \Xdiag
\end{equation}
between $X\formal_{TX}$, the formal neighborhood of the zero section in the holomorphic tangent bundle $TX$, and $\Xdiag$, regarded as nonlinear fiber bundles over $X$. However, the exopnential map is not a biholomorphism and Kapranov described the pullback of holomorphic structure on $\Xdiag$ to $X\formal_{TX}$ via $\expmap$ using the Atiyah class of $TX$. Algebraically, the pullback map via $\expmap$ gives rise to an ismorphism of dgas
\begin{equation}\label{eq:intro_expmap_pullback}
  \expmap^*: (\AOD_X(\JetX),\dbar) \to (\AOD_X(\Shat(T^*X)), D),
\end{equation}
where $\Shat(T^*X)$ is the completed symmetric algebra of the cotangent bundle $T^*X$ and can be thought of as the pushforward of $\Osheaf_{X\formal_{TX}}$ via the natural projection $X\formal_{TX} \to X$. The differential $D$ encodes the pulled back holomorphic structure on $TX$ and it is not the same as the ordinary $\dbar$ on $\Shat(T^*X)$ induced from that of $T^*X$. Kapranov showed that one has to correct the usual $\dbar$ by the curvature and its higher covariant derivatives. Let $\nabla$ be the $(1,0)$-part of the Levi-Civita connection and define
\begin{displaymath}
  R_2 = R = [\dbar, \nabla] \in \A^1_X(\Hom(S^2 TX, TX))
\end{displaymath}
and
\begin{displaymath}
  R_n = \nabla^{n-2} R \in \A^{0,1}_X(\Hom(S^n TX, TX)), \quad n \geq 2,
\end{displaymath}
so that $R_2 = R$ is a Dolbeault representative of the Atiyah class $\alpha_{TX} \in \gext^1_X(S^2 TX, TX)$ of the tangent bundle (\cite{Atiyah}). Kapranov obtained an explicit formula for $D$:
\begin{equation}\label{eq:Kapranov_D}
  D = \dbar + \sum_{n \geq 2} \tilde{R}_n,
\end{equation}
where $\tilde{R}_n$ is the derivation on $\AOD(\Shat(T^*X))$ induced by $R_n$, regarded now as elements in $\A^{0,1}(\Hom(T^*X, S^n T^*X))$.

Kapranov's construction of the exponential map and proof of the formula \eqref{eq:Kapranov_D} for the differential $D$ is based on the formal geometry developed by Gelfand, Kazhdan and Fuchs (see \cite{GelfandKazhdan}, \cite{GelfandKazhdanFuks}). Nevertheless, we will give a very simple formula \eqref{eq:exp_Kahler} for the pullback map $\expmap^*$ in Section \ref{subsec:Kapranov} and we will use it to give an algebraic proof of Kapranov's theorem. This formula is handy for explict computation and later generalization to the case of general embeddings. 

As an interesting application of our formula for $\expmap^*$ (with slight adjustment), we will show in Section \ref{subsec:application} that the Atiyah class is the only obstruction for the existence of an isomorphism between the $\Osheaf_X$-sheaves $\JetX$ and $\Shat(T^*X)$ respecting the natural filtrations on both sides (Theorem \ref{thm:jet_Atiyah}), for an arbitrary complex manifold $X$ without the K\"ahler assumption. Note that it is not even obvious from Kapranov's formula \eqref{eq:Kapranov_D} of $D$ in the K\"ahler case, since the curvature $R$ does not necessarily vanishes on the nose even when its corresponding cohomology class $\alpha_{TX} \in \gext^1_X(S^2 TX, TX)$ is zero.

This little yet somewhat surprising fact has intimate connection with the work of Arinkin and C\u{a}ld\u{a}raru \cite{ArinkinCaldararu} and Calaque, C\u{a}ld\u{a}raru and Tu \cite{CalaqueCaldararuTu}, which is best understood in the language of derived algebraic geometry in the sense of Lurie \cite{Lurie}. We would also like to mention the recent work of \cite{ChenStienonXu} of Chen, Sti\'enon and Xu and the work of Calaque \cite{Calaque}. They considered the general case of a inclusion of Lie algebroids $A \subset L$, which is specialized to our case when $L = TX \otimes \complex$ is the complexified tangent bundle of a complex manifold $X$ and $A = T^{0,1}X$ is the $(0,1)$-part of $L$. In particular, Theorem \ref{thm:jet_Atiyah} can be regarded as the 'Koszul dual' of the Theorem 1.1 in \cite{Calaque}. However, we will not explore in details these relations in this paper.

To get ride of the K\"ahler assumption, Kapranov introduced the bundle $X_{exp}$ of formal exponential maps on $X$ (which he denoted as $\Phi(X)$). Any smooth section $\sigma$ of $X_{exp}$ gives a exponential map of the form \eqref{eq:intro_exp} and the corresponding pullback map \eqref{eq:intro_expmap_pullback}, where the differential $D$ can be characterized by a formula similar to \eqref{eq:Kapranov_D} containing analogues of $R_n$ which measure the failure of $\sigma$ to be holomorphic. We will review in Section \ref{sec:Formal} the construction of $X_{exp}$ and show that it can be naturally identified with another bundle $X_{conn}$ of jets of holomorphic connections which are flat and torsion-free, which is more related to our formula for the pullback map $\expmap^*$. The discussion will make it clear that our construction of the exponential map is equivalent to Kapranov's construction in the K\"ahler case.

The paper is organized as follows. In Section \ref{sec:Dolbeault}, we review the definitions and basic properties of the Dolbeault dga from \cite{DolbeaultDGA} and show that, in the case of a diagonal embedding, the Dolbeault dga is isomorphic to the Dolbeault resolution of the jet bundle considered in \cite{Kapranov}. In Section \ref{sec:Kapranov}, we state and prove our version of Kapranov's theorem. We will discuss the application mentioned above and make more comments about its relation with other works. Section \ref{sec:Formal} is devoted to setting up the language of formal geometry adopted in the paper. We mainly follow the nice approach of Bezrukavnikov and Kaledin \cite{BeKaledin}. At the end, we argue that our version of the holomorphic exponential map coincides that with Kapranov's original one.

\begin{thank}
  We would like to thank Jonathan Block, Damien Calaque, Andrei C\u{a}ld\u{a}raru, Nigel Higson, Mathieu Sti\'enon, Junwu Tu and Ping Xu for helpful discussions.
\end{thank}

\section{Dolbeault dga of a formal neighborhood}\label{sec:Dolbeault}

\subsection{Definitions and notations}\label{subsec:defn}

Let $Y$ be a complex manifold and $X$ be a closed complex submanifold in $Y$. We denote by $i: X \hookrightarrow Y$ the embedding, by $\Osheaf_X$ and $\Osheaf_Y$ the structure sheaf of germs of holomorphic functions over $X$ and $Y$ respectively, by $\Isheaf$ the ideal sheaf of $\Osheaf_Y$ of holomorphic functions vanishing along $X$. Then we can define \emph{the $r$-th formal neighborhood $\Yhatfinite$} of the embedding as the ringed space $(X, \Osheaf_{\Yhatfinite})$ of which the structure sheaf is
\begin{displaymath}
  \Osheaf_{\Yhatfinite} = \Osheaf_Y / \Isheaf^{r+1}.
\end{displaymath}
\emph{The (complete) formal neighborhood $\Yhat = \Yhat^{\upscript{(\infty)}}$} is defined to be the ringed space $(X, \Osheaf_{\Yhat})$ where
\begin{displaymath}
 \Osheaf_{\Yhat} = \varprojlim_{r} \Osheaf_{\Yhatfinite} = \varprojlim_{r} \Osheaf_X / \Isheaf^{r+1}.
\end{displaymath}
We will also write $X^{\upscript{(\infty)}}_Y$ and $X^{\upscript{(r)}}_Y$ instead of $\Yhat$ and $\Yhatfinite$ to emphasize the submanifolds in question.

We review the notion of \emph{Dolbeault differential graded algebra (dga)} of the embedding $i: X \hookrightarrow Y$ from \cite{DolbeaultDGA}. Let $(\AOD(Y), \dbar) = (\wedge^\bullet \Omega_Y^{0,1}, \dbar)$ denote the Dolbeault dga of $Y$, with multiplication being the wedge product and the differential
\begin{displaymath}
  \dbar: \AOD(Y) \to \A^{0,\bullet+1}(Y)
\end{displaymath}
being the $(0,1)$-part of the de Rham differential. For each nonnegative integer $r$, we set $\aaa^\bullet_r$ to be the graded ideal of $\AOD(Y)$ consisting of those forms $\omega \in \AOD(Y)$ satisfying
\begin{equation}\label{defn:aaa_finite}
  i^*(\Lie_{V_1} \Lie_{V_2} \cdots \Lie_{V_l} \omega) = 0, \quad \forall~ 1 \leq j \leq l,
\end{equation}
for any collection of smooth $(1,0)$-vector fields $V_1, V_2, \ldots, V_l$ over $Y$, where $0 \leq l \leq r$.

\begin{proposition}[\cite{DolbeaultDGA}]
  The ideal $\aaa^\bullet_r$ is invariant under the action of $\dbar$, hence is a dg-ideal of $(\AOD(Y),\dbar)$.
\end{proposition}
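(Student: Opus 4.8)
The plan is to verify directly that $\dbar\omega$ satisfies the defining condition \eqref{defn:aaa_finite}: for every $0\le l\le r$ and all smooth $(1,0)$-vector fields $V_1,\dots,V_l$ on $Y$, I must show $i^*(\Lie_{V_1}\cdots\Lie_{V_l}\dbar\omega)=0$. The first observation is that on a $(0,\bullet)$-form $\eta$ and a $(1,0)$-field $V$, Cartan's formula together with the fact that $\iota_V$ annihilates $(0,\bullet)$-forms gives $\Lie_V\eta=\iota_V\partial\eta$, so each $\Lie_{V_i}$ preserves $\AOD(Y)$ and the definition of $\aaa^\bullet_r$ is internally consistent.

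Next I would establish the key commutation identity on $\AOD(Y)$: for a $(1,0)$-field $V$ with associated $(0,1)$-form-valued vector field $\dbar V\in\A^{0,1}(T^{1,0}Y)$,
\[
\Lie_V\dbar\eta=\dbar\Lie_V\eta-\iota_{\dbar V}\partial\eta .
\]
This follows from $\Lie_V\eta=\iota_V\partial\eta$, the relation $\partial\dbar=-\dbar\partial$, and the standard identity $\{\iota_V,\dbar\}=\iota_{\dbar V}$. Telescoping it through $L=\Lie_{V_1}\cdots\Lie_{V_l}$ moves $\dbar$ to the outside at the cost of $l$ defect terms:
\[
\Lie_{V_1}\cdots\Lie_{V_l}\dbar\omega=\dbar\,\Lie_{V_1}\cdots\Lie_{V_l}\omega-\sum_{j=1}^{l}\Lie_{V_1}\cdots\Lie_{V_{j-1}}\,\iota_{\dbar V_j}\partial\,\Lie_{V_{j+1}}\cdots\Lie_{V_l}\,\omega .
\]

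I would then kill the two pieces separately. For the main term, pullback along the holomorphic embedding $i$ intertwines the Dolbeault operators, so $i^*\dbar(L\omega)=\dbar\,i^*(L\omega)$, which vanishes because $\omega\in\aaa^\bullet_r$ and $l\le r$ force $i^*(L\omega)=0$. For the defect terms I would use two order-lowering lemmas: (i) $\Lie_V(\aaa^\bullet_s)\subseteq\aaa^\bullet_{s-1}$, which is immediate from the definition since $\Lie_V$ simply raises the number of Lie derivatives by one; and (ii) $\iota_{\dbar V}\partial(\aaa^\bullet_s)\subseteq\aaa^\bullet_{s-1}$. Granting these, the $j$-th defect term starts from $\omega\in\aaa^\bullet_r$, loses $l-j$ orders under $\Lie_{V_{j+1}}\cdots\Lie_{V_l}$, one more under $\iota_{\dbar V_j}\partial$, and $j-1$ more under $\Lie_{V_1}\cdots\Lie_{V_{j-1}}$, landing in $\aaa^\bullet_{r-l}\subseteq\aaa^\bullet_0$; hence its $i^*$ vanishes. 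Combining, $i^*(L\dbar\omega)=0$ for all $l\le r$, i.e.\ $\dbar\omega\in\aaa^\bullet_r$.

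The main obstacle is lemma (ii), where the geometry of the submanifold enters. I expect to prove it in holomorphic coordinates $(w,t)$ adapted to $X=\{t=0\}$, in which $\aaa^\bullet_s$ is characterized by the vanishing, to holomorphic order $s$ along $X$, of the tangential coefficients (those along $d\bar w^I$). In these terms $\iota_{\dbar V}\partial$ applies exactly one holomorphic derivative to the coefficients—dropping the order from $s$ to $s-1$—while the contraction against $\dbar V$ only inserts anti-holomorphic factors; the components surviving $i^*$ involve only the tangential derivatives $\partial_{\bar w}$, which are tangent to $X$ and therefore preserve vanishing along $X$. Verifying this coordinate characterization of $\aaa^\bullet_s$, together with the identity $\{\iota_V,\dbar\}=\iota_{\dbar V}$, is the one genuinely computational point; the rest of the argument is formal.
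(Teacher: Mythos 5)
This proposition is imported from \cite{DolbeaultDGA} with no proof reproduced in the present paper, so there is no in-text argument to compare against; judged on its own terms, your proposal is correct. The reduction $\Lie_V\eta=\iota_V\partial\eta$ on $(0,\bullet)$-forms, the commutator identity $\Lie_V\dbar\eta=\dbar\Lie_V\eta-\iota_{\dbar V}\partial\eta$ (which indeed follows from $\partial\dbar=-\dbar\partial$ and $\{\iota_V,\dbar\}=\iota_{\dbar V}$), the telescoping, and the index bookkeeping all check out: since $j\ge 1$ one has $r-(l-j)\ge 1$ before the defect operator is applied, and each defect term ends in $\aaa^\bullet_{r-l}\subseteq\aaa^\bullet_0$, hence is killed by $i^*$. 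Your lemma (ii) is also true, but the justification is phrased slightly misleadingly: in adapted coordinates $(w,t)$ with $X=\{t=0\}$, the purely tangential components of $\iota_{\dbar V}\partial\omega$ are sums of terms $(\partial_{\bar w_b}v^a)(\partial_{z_a}\omega_I)$ with $I$ tangential, and the factor $\partial_{\bar w_b}v^a$ is just some smooth function; the point is not that ``$\partial_{\bar w}$ preserves vanishing along $X$'' but that multiplication by a smooth function preserves the holomorphic order of vanishing, while the single holomorphic derivative $\partial_{z_a}$ accounts for the drop from order $s$ to $s-1$. Together with the coordinate characterization of $\aaa^\bullet_s$ (all $(1,0)$-derivative monomials of order $\le s$ of the tangential coefficients vanish along $X$, proved by induction on $l$ using $\Lie_V d\bar z^a=0$ so that the Lie derivatives act only on coefficients), the argument is complete and is the natural one for this statement.
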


\begin{definition}
  The \emph{Dolbeault dga of $\Yhatfinite$} is the quotient dga
  \begin{displaymath}
    \A^\bullet(\Yhatfinite) := \AOD(Y) / \aaa^\bullet_r.
  \end{displaymath}
  In particular, $\A^\bullet(\Yhat^{\upscript{(0)}}) = \AOD(X)$. Moreover, there is a descending filtration of dg-ideals
  \begin{displaymath}
    \aaa^\bullet_0 \supset \aaa^\bullet_1 \supset \aaa^\bullet_2 \supset \cdots,
  \end{displaymath}
  which induces an inverse system of dgas with surjective connecting morphisms
  \begin{displaymath}
    \AOD(X) = \A^\bullet(\Yhat^{\upscript{(0)}}) \leftarrow \A^\bullet(\Yhat^{\upscript{(1)}}) \leftarrow \A^\bullet(\Yhat^{\upscript{(2)}}) \leftarrow \cdots.
  \end{displaymath}
  The \emph{Dolbeault dga of $\Yhat$} is defined to be the inverse limit
  \begin{displaymath}
    \A^\bullet(\Yhat) = \A^\bullet(\Yhat^{\upscript{(\infty)}}):= \varprojlim_r \A^\bullet(\Yhatfinite).
  \end{displaymath}
  We will write $\A(\Yhat) = \A^0(\Yhat)$ and $\A(\Yhatfinite) = \A^0(\Yhatfinite)$ for the zeroth components of the Dolbeault dgas.
\end{definition}

By Remark 2.4., \cite{DolbeaultDGA}, we have the following alternative description of $(\A^\bullet(\Yhat),\dbar)$.

\begin{proposition}
  The natural map
  \begin{equation}\label{eq:Ahat_quotient}
    \AOD(Y) \Big/ \bigcap_{r \in \NN} \aaa^\bullet_r \to \A^\bullet(\Yhat)
  \end{equation}
  induced by the quotient maps
  \begin{displaymath}
    \AOD(Y) \to \A^\bullet(\Yhatfinite), \quad r \in \NN,
  \end{displaymath}
  is an isomorphism of dgas.
\end{proposition}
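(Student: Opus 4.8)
\emph{The plan is as follows.} Write $\AOD(Y)$ for the source dga. The stated map is assembled from the quotient morphisms $\AOD(Y) \to \A^\bullet(\Yhatfinite) = \AOD(Y)/\aaa_r^\bullet$, which are compatible with the connecting maps and therefore define a morphism into $\varprojlim_r \AOD(Y)/\aaa_r^\bullet = \A^\bullet(\Yhat)$. Each of these is a morphism of dgas, so the induced map is automatically compatible with both the product and $\dbar$; a bijective morphism of dgas has a morphism of dgas as its inverse, so it suffices to prove that the map is a bijection of graded vector spaces and the dga claim then follows for free. Injectivity is immediate and purely formal: the kernel of $\AOD(Y) \to \varprojlim_r \AOD(Y)/\aaa_r^\bullet$ consists of the forms lying in every $\aaa_r^\bullet$, i.e.\ exactly $\bigcap_{r}\aaa_r^\bullet$, so the map out of the quotient is injective. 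All the content is therefore in surjectivity, which is the assertion that $\AOD(Y)$ is \emph{complete} for the filtration $\{\aaa_r^\bullet\}$: every compatible system of classes is represented by a single form.

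First I would make the quotients concrete. Fixing adapted holomorphic coordinates $(z,w)=(z_1,\dots,z_n,w_1,\dots,w_k)$ on $Y$ with $X=\{w=0\}$, a short computation shows that for a $(0,q)$-form $\omega$ and a vector field $V$ of type $(1,0)$ one has $\iota_V\omega=0=\iota_V\dbar\omega$, so $\Lie_V\omega=\iota_V\partial\omega$ acts on Dolbeault coefficients by the derivation $V$ alone. Iterating, then applying $i^*$, and using that $i^*d\bar w_j=0$ while the $\partial_{z_i}$ are tangent to $X$, one finds that $\omega\in\aaa_r^\bullet$ if and only if every coefficient $f_I$ of the $d\bar z^I$-components of $\omega$ satisfies $i^*(\partial_w^\alpha f_I)=0$ for all $|\alpha|\le r$; the $d\bar w$-components carry no constraint and in fact lie in $\bigcap_r\aaa_r^\bullet$. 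Consequently a class in $\AOD(Y)/\aaa_r^\bullet$ is precisely an order-$r$ holomorphic normal jet along $X$, and an element of the inverse limit is a full formal normal jet, encoded by a family of smooth functions $c_{I,\alpha}$ on $X$ indexed by a tangential multi-index $I$ and a normal multi-index $\alpha$.

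With this description, surjectivity becomes a Borel-type realization problem: given jet data $\{c_{I,\alpha}\}$, produce a smooth form whose holomorphic normal jet along $X$ is exactly this family. Locally I would set $\omega=\sum_I\bigl(\sum_\alpha \tilde c_{I,\alpha}\,\tfrac{w^\alpha}{\alpha!}\,\chi(\lambda_\alpha|w|)\bigr)\,d\bar z^I$, where $\tilde c_{I,\alpha}$ is any smooth extension of $c_{I,\alpha}$ off $X$, $\chi$ is a cutoff equal to $1$ near $0$, and the parameters $\lambda_\alpha\to\infty$ are chosen so that the series converges in every $C^m$-norm; this is the classical proof of E.~Borel's theorem, and by construction $i^*(\partial_w^\beta f_I)=c_{I,\beta}$ for every $\beta$, so $\omega$ is congruent modulo $\aaa_r^\bullet$ to the prescribed class for every $r$. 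To globalize I would take a locally finite partition of unity $\{\rho_\mu\}$ on $X$ subordinate to coordinate charts, run the construction for $\rho_\mu\cdot\{c_{I,\alpha}\}$ to obtain forms $\omega_\mu$ supported in a normal neighborhood of $\operatorname{supp}\rho_\mu$, and set $\omega=\sum_\mu\omega_\mu$; local finiteness makes this a genuine smooth form, and because the jet-extraction operators $i^*\Lie_{V_1}\cdots\Lie_{V_l}$ are local and linear, $\omega$ still realizes the given system. (If one instead interprets $\AOD(Y)$ stalk-wise, this last globalization step is unnecessary and the Borel argument is purely local.)

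The one genuinely analytic step — and the place where a naive version of the statement could fail — is the Borel summation: completeness of $\AOD(Y)$ for this filtration rests essentially on working in the smooth category, where an arbitrary formal normal jet is realizable, whereas in the holomorphic category the analogous map would be far from surjective. I therefore expect the main obstacle to be bookkeeping the convergence of the Borel series (the choice of the $\lambda_\alpha$) uniformly enough to patch it across charts; once that is arranged, the bijection, and with it the isomorphism of dgas, follows.
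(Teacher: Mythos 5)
Your argument is correct, but note that the paper itself supplies no proof of this proposition --- it is quoted from Remark 2.4 of \cite{DolbeaultDGA} --- so there is nothing in-text to compare against; your reduction (injectivity is formal, surjectivity is completeness of $\AOD(Y)$ for the filtration $\aaa^\bullet_r$, established by identifying classes mod $\aaa^\bullet_r$ with holomorphic normal jets and realizing an arbitrary compatible system by Borel summation plus a partition of unity) is exactly the expected content of that remark. One small correction to your local formula: if $\tilde c_{I,\alpha}$ is an \emph{arbitrary} smooth extension of $c_{I,\alpha}$, then $i^*\bigl(\partial_w^\beta(\tilde c_{I,\alpha} w^\alpha/\alpha!)\bigr)=\binom{\beta}{\alpha}\,i^*\bigl(\partial_w^{\beta-\alpha}\tilde c_{I,\alpha}\bigr)$ contributes to all higher jets $\beta>\alpha$, so the constructed form realizes a triangular perturbation of the prescribed data rather than the data itself; you should either take the extension constant in the normal coordinates of the chart (the usual choice in Borel's lemma, after which the formula is exact) or invoke the upper-triangularity with identity diagonal to solve for corrected coefficients recursively.
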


In \cite{DolbeaultDGA}, we sheafified the Dolbeault dgas $\A^\bullet(\Yhatfinite)$ and $\A^\bullet(\Yhat)$ to obtain sheaves of dgas $\Asheaf^\bullet(\Yhatfinite)$ and $\Asheaf^\bullet(\Yhat)$ respectively over $X$. Similarly, $\aaa^\bullet_r$ induces a sheaf of dg-ideals $\widetilde{\aaa}^\bullet_r$. Moreover, there are natural inclusions of sheaves of algebras $\Osheaf_{\Yhatfinite} \hookrightarrow \Asheaf(\Yhatfinite)$ and $\Osheaf_{\Yhat} \hookrightarrow \Asheaf(\Yhat)$. The following result was proved in \cite{DolbeaultDGA}.

\begin{theorem}
  For any nonnegative integer $r$ or $r = \infty$, the complex of sheaves
  \begin{displaymath}
    0 \to \Osheaf_{\Yhatfinite} \to \Asheaf^0_{\Yhatfinite} \xrightarrow{\dbar} \Asheaf^1_{\Yhatfinite} \xrightarrow{\dbar} \cdots \xrightarrow{\dbar} \Asheaf^m_{\Yhatfinite} \to 0
  \end{displaymath}
  is exact, where $m = \dim X$. In other words, $(\Asheaf^\bullet_{\Yhatfinite},\dbar)$ is a fine resolution of $\Osheaf_{\Yhatfinite}$.
\end{theorem}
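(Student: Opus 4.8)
The plan is to establish the two assertions of the statement separately: that each $\Asheaf^\bullet_{\Yhatfinite}$ is \emph{fine}, which is soft, and that the displayed complex is \emph{exact}, which is the real content. For fineness, I would observe that each $\Asheaf^\bullet_{\Yhatfinite}$ is a sheaf of modules over the sheaf $\mathcal{C}^\infty_X$ of smooth functions on $X$: multiplication by a smooth function on $X$ is well defined on $\AOD(Y)/\widetilde{\aaa}^\bullet_r$ because the ideals $\widetilde{\aaa}^\bullet_r$ are stable under it. Since $\mathcal{C}^\infty_X$ admits partitions of unity, every $\mathcal{C}^\infty_X$-module is fine, so all the $\Asheaf^\bullet_{\Yhatfinite}$ (and their inverse limit $\Asheaf^\bullet_{\Yhat}$) are fine. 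It then remains to prove exactness, a stalkwise statement, which I would attack by induction on $r$, deferring $r=\infty$ to an inverse-limit argument.

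The heart of the proof is the identification of the graded pieces of the filtration $\widetilde{\aaa}^\bullet_0 \supset \widetilde{\aaa}^\bullet_1 \supset \cdots$. I would show there is an isomorphism of complexes
\begin{displaymath}
  \widetilde{\aaa}^\bullet_{r-1}/\widetilde{\aaa}^\bullet_r \;\cong\; \AOD_X(S^r\conormal),
\end{displaymath}
where $\conormal = \Isheaf/\Isheaf^2$ is the holomorphic conormal bundle and the right-hand side carries the Dolbeault operator of the holomorphic vector bundle $S^r\conormal$. To see this, I would work in local coordinates $(z^1,\dots,z^m,w^1,\dots,w^c)$ adapted to $X=\{w=0\}$. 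For a $(0,q)$-form $\omega=\sum_{\bar J} f_{\bar J}\,d\bar z^{\bar J}$ and a $(1,0)$-field $V$ one has $\Lie_V\omega=\sum_{\bar J} V(f_{\bar J})\,d\bar z^{\bar J}$, so membership in $\widetilde{\aaa}_{r-1}$ means that all holomorphic normal derivatives of the $f_{\bar J}$ vanish along $X$ up to order $r-1$, while the class modulo $\widetilde{\aaa}_r$ records exactly the order-$r$ normal holomorphic jet. This jet is symmetric and tensorial in the $r$ normal directions, hence a section of $S^r\conormal$, and $i^*$ kills every $d\bar w^a$, leaving only the tangential factors in $\wedge^\bullet\Omega^{0,1}_X$. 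Verifying that $\dbar$ descends on the graded level to the Dolbeault operator of $S^r\conormal$, i.e. that the higher-order and curvature-type corrections drop out upon passage to leading symbols, is the step I expect to be the main obstacle and to require the most care.

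Granting this identification, the inductive step is formal. By the classical Dolbeault--Grothendieck lemma with coefficients in the holomorphic bundle $S^r\conormal$, the complex $\AOD_X(S^r\conormal)$ resolves the sheaf $S^r\conormal$. The short exact sequence of complexes
\begin{displaymath}
  0 \to \widetilde{\aaa}^\bullet_{r-1}/\widetilde{\aaa}^\bullet_r \to \Asheaf^\bullet_{\Yhatfinite} \to \Asheaf^\bullet_{\Yhat^{\upscript{(r-1)}}} \to 0
\end{displaymath}
gives a long exact sequence of cohomology sheaves. Assuming inductively that $\Asheaf^\bullet_{\Yhat^{\upscript{(r-1)}}}$ resolves $\Osheaf_{\Yhat^{\upscript{(r-1)}}}$ (the base case $r=0$ being the ordinary Dolbeault lemma for $\Osheaf_X$), the vanishing of the higher cohomology sheaves of the sub- and quotient complexes forces $\mathcal{H}^q(\Asheaf^\bullet_{\Yhatfinite})=0$ for $q\ge 1$, while in degree $0$ one obtains $0\to S^r\conormal \to \mathcal{H}^0 \to \Osheaf_{\Yhat^{\upscript{(r-1)}}}\to 0$. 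Comparing this, via the natural map, with the structure-sheaf sequence $0\to \Isheaf^r/\Isheaf^{r+1} \to \Osheaf_{\Yhatfinite} \to \Osheaf_{\Yhat^{\upscript{(r-1)}}}\to 0$ and using $\Isheaf^r/\Isheaf^{r+1}\cong S^r\conormal$ identifies $\mathcal{H}^0$ with $\Osheaf_{\Yhatfinite}$, completing the induction for finite $r$.

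Finally, for $r=\infty$ I would pass to the inverse limit. Since the connecting morphisms $\Asheaf^\bullet_{\Yhatfinite}\to \Asheaf^\bullet_{\Yhat^{\upscript{(r-1)}}}$ are surjective in each cohomological degree, the inverse system satisfies the Mittag-Leffler condition, so $\varprojlim^1$ vanishes and the inverse limit of the exact complexes remains exact; moreover $\Osheaf_{\Yhat}=\varprojlim_r\Osheaf_{\Yhatfinite}$ and $\Asheaf^\bullet_{\Yhat}=\varprojlim_r\Asheaf^\bullet_{\Yhatfinite}$ by definition. The only subtlety is that exactness of sheaves is stalkwise whereas inverse limits need not commute with stalks; I would handle this by running the Mittag-Leffler argument locally over each adapted coordinate chart, where the transition maps are surjective with the graded pieces $\AOD_X(S^r\conormal)$ as kernels, so that exactness holds on sections over a basis of opens and hence on stalks.
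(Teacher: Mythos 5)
The paper does not actually prove this theorem here: it is quoted verbatim from \cite{DolbeaultDGA} (``The following result was proved in \cite{DolbeaultDGA}''), so there is no in-text proof to compare against. Judged on its own, your strategy is sound and is precisely the one suggested by the machinery the paper sets up in Section \ref{subsec:defn}: fineness from the $\mathcal{C}^\infty_X$-module structure and partitions of unity; the identification of the graded pieces of the filtration with $\AOD_X(S^r\conormal)$, which is exactly the cosymbol isomorphism $\tau_{r-1}$ of \eqref{eq:isom_assoc_graded}; and a five-lemma/long-exact-sequence induction on $r$ of the same shape the paper itself uses to prove Proposition \ref{prop:isom_diag_dga}. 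Your comparison of the two degree-zero extensions via $\Isheaf^r/\Isheaf^{r+1}\simeq S^r\conormal$ is the right way to close the induction.

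Two points remain genuinely open in your sketch, and you should be aware that they carry most of the weight. First, the assertion that $\dbar$ descends on $\widetilde{\aaa}^\bullet_{r-1}/\widetilde{\aaa}^\bullet_r$ to the Dolbeault operator of the holomorphic bundle $S^r\conormal$ is not a formality: it is the statement that $\tau_{r-1}$ is an isomorphism of dg-modules over $(\AOD(X),\dbar)$, which the paper records as a separate proposition (again deferred to \cite{DolbeaultDGA}); you correctly flag it but do not prove it, and you also implicitly use that the test fields in \eqref{defn:aaa_finite} may be restricted to normal directions, which itself needs the lower-order vanishing. Second, your Mittag-Leffler step for $r=\infty$ needs exactness and surjectivity of the transition maps at the level of \emph{sections} over a basis of opens, not just of sheaves; this requires an acyclicity input (e.g.\ that the $\Asheaf^q_{\Yhatfinite}$ are soft and that $\Osheaf_{\Yhatfinite}$ and $S^r\conormal$ have vanishing $H^1$ on small Stein opens, the latter being coherent there) before you may conclude $\varprojlim^1=0$ and pass to stalks. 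With those two lemmas supplied, the argument is complete.
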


As the completion of $\AOD(Y)$ with respect to the filtration $\aaa^\bullet_r$, the dga $\A^\bullet(\Yhat)$ is itself filtered and its associated graded dga is
\begin{displaymath}
  \graded \A^\bullet(\Yhat) \simeq (\AOD(Y) / \aaa^\bullet_0) \oplus \bigoplus_{r=0}^{\infty} \aaa^\bullet_r / \aaa^\bullet_{r+1}.
\end{displaymath}
Note that $\AOD(X) \simeq \AOD(Y) / \aaa^\bullet_0$ and $\aaa^\bullet_r / \aaa^\bullet_{r+1}$ are dg-modules over $(\AOD(X),\dbar)$. We define, for each $r \geq 0$, a `cosymbol map' of complexes
\begin{equation}\label{eq:isom_assoc_graded}
  \tau_r : (\aaa^\bullet_r / \aaa^\bullet_{r+1}) \to \AOD_X(S^{r+1} \conormal),
\end{equation}
where $S^{r+1} \conormal$ is the $(r+1)$-fold symmetric tensor of the conormal bundle $\conormal$ of the embedding. Given any $(r+1)$-tuple of smooth sections $\mu_1, \ldots, \mu_{r+1}$ of $\normal$, we lift them to smooth sections of $TY|_X$ and extend to smooth $(1,0)$-tangent vector fields $\tilde{\mu}_1, \ldots, \tilde{\mu}_{r+1}$ on $Y$ (defined near $X$). We then define the image of $\omega + \aaa^\bullet_{r+1} \in \aaa^\bullet_r / \aaa^\bullet_{r+1}$ under $\tau$ for any $\omega \in \aaa^\bullet_{r}$, thought of as linear functionals on $(\normal)^{\otimes (r+1)}$, by the formula
\begin{equation}
  \tau_r (\omega + \aaa^\bullet_{r+1}) (\mu_1 \otimes \cdots \otimes \mu_{r+1}) = i^* \Lie_{\tilde{\mu}_1} \Lie_{\tilde{\mu}_2} \cdots \Lie_{\tilde{\mu}_{r+1}} \omega.
\end{equation}
The map $\tau_r$ is well-defined and is independent of the choice of the representative $\omega$ and $\tilde{\mu}_j$'s. Moreover, the tensor part of $\tau_r (\omega + \aaa^\bullet_{r+1})$ is indeed symmetric.

\begin{proposition}
  The map $\tau_r$ in \eqref{eq:isom_assoc_graded} is an isomorphism of dg-modules over $(\AOD(X),\dbar)$.
\end{proposition}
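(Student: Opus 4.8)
The plan is to show that $\tau_r$ is a well-defined morphism of complexes, that it respects the $(\AOD(X),\dbar)$-module structure, and that it is bijective. The well-definedness is essentially asserted in the paragraph preceding the statement, so I would concentrate on the remaining three properties. The key tool throughout will be the interplay between the Lie-derivative characterization of $\aaa^\bullet_r$ in \eqref{defn:aaa_finite} and the Cartan formula $\Lie_V = d \circ \iota_V + \iota_V \circ d$, applied to the $(0,1)$-part $\dbar$.

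First I would verify that $\tau_r$ intertwines the differentials, i.e. that $\tau_r(\dbar\omega + \aaa^\bullet_{r+1}) = \dbar\bigl(\tau_r(\omega + \aaa^\bullet_{r+1})\bigr)$, where the right-hand $\dbar$ is the one on $\AOD_X(S^{r+1}\conormal)$ induced from the Dolbeault operator on the conormal bundle. The natural strategy is to choose the extending vector fields $\tilde\mu_j$ to be of type $(1,0)$ and \emph{holomorphic} to the relevant order along $X$, so that $\Lie_{\tilde\mu_j}$ commutes with $\dbar$ up to terms that vanish after applying $i^*$ and restricting to $\aaa^\bullet_r/\aaa^\bullet_{r+1}$; then the identity $i^*\Lie_{\tilde\mu_1}\cdots\Lie_{\tilde\mu_{r+1}}(\dbar\omega) = \dbar\,i^*\Lie_{\tilde\mu_1}\cdots\Lie_{\tilde\mu_{r+1}}\omega$ follows by commuting $\dbar$ past each Lie derivative and past $i^*$. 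The compatibility with the $\AOD(X)$-module structure is checked similarly on representatives: for $\eta \in \AOD(Y)$ with $i^*\eta$ representing a class in $\AOD(X)$, one has $\Lie_{\tilde\mu}(\eta\wedge\omega) = (\Lie_{\tilde\mu}\eta)\wedge\omega + \eta\wedge\Lie_{\tilde\mu}\omega$, and since $\omega \in \aaa^\bullet_r$ the first term lands in $\aaa^\bullet_r$ and drops out modulo $\aaa^\bullet_{r+1}$, leaving exactly $i^*\eta \cdot \tau_r(\omega)$.

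For bijectivity I would argue that $\tau_r$ is both injective and surjective directly from the definition of $\aaa^\bullet_r$. Injectivity is almost tautological: if $\tau_r(\omega + \aaa^\bullet_{r+1}) = 0$ then $i^*\Lie_{\tilde\mu_1}\cdots\Lie_{\tilde\mu_{r+1}}\omega = 0$ for all choices of sections, which together with $\omega \in \aaa^\bullet_r$ (so that all lower-order Lie derivatives already vanish after $i^*$) is precisely the condition \eqref{defn:aaa_finite} for membership in $\aaa^\bullet_{r+1}$, whence $\omega + \aaa^\bullet_{r+1} = 0$. For surjectivity I would construct, given a symmetric tensor-valued form $\phi \in \AOD_X(S^{r+1}\conormal)$, an explicit preimage: locally choose holomorphic coordinates $(z,w)$ in which $X = \{w = 0\}$, express $\phi$ in terms of the frame $dw_{i_1}\cdots dw_{i_{r+1}}$ of $S^{r+1}\conormal$ paired against $\partial_{w}$, and build $\omega$ as a form whose coefficients are the components of $\phi$ times the monomials $w_{i_1}\cdots w_{i_{r+1}}$, so that applying $(r+1)$ Lie derivatives along the $\partial_{w_i}$ directions recovers $\phi$ after restriction while all derivatives of order $\le r$ restrict to zero. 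Patching these local preimages with a partition of unity, and noting that the leading symbol computed by $\tau_r$ is independent of the patching choices, produces a global preimage.

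The main obstacle I anticipate is the differential-compatibility step, because $\dbar$ and $\Lie_{\tilde\mu_j}$ do not commute in general: the commutator $[\dbar, \Lie_{\tilde\mu}]$ equals $\Lie_{(\dbar\tilde\mu)}$ (a Lie derivative along the $\dbar$ of the vector field), which need not vanish when $\tilde\mu$ is merely a smooth extension. The delicate point is therefore to show that the contribution of these commutator terms lands in $\aaa^\bullet_{r+1}$ and hence is invisible after passing to the quotient, or equivalently that the induced $\dbar$ on the associated graded acts on the symbol exactly through the Dolbeault operator of the conormal bundle twisted by the holomorphic structure. I would handle this by a careful local computation in the coordinates above, tracking which terms survive restriction to $X$ after $r+1$ iterated Lie derivatives; the bookkeeping — showing that all the correction terms carry an extra factor vanishing to order $r+1$ along $X$ — is the technical heart of the argument, whereas injectivity, surjectivity, and module-compatibility are comparatively formal.
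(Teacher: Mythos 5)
The paper states this proposition without proof here (it is carried over from \cite{DolbeaultDGA}, with only the well-definedness of $\tau_r$ discussed in the surrounding text), so there is no in-paper argument to compare against; your reconstruction is, however, essentially sound. The module-compatibility and injectivity steps are correct: injectivity is indeed immediate once independence of choices is granted, since vanishing of $\tau_r(\omega+\aaa^\bullet_{r+1})$ against all normal test sections, together with $\omega\in\aaa^\bullet_r$, is precisely condition \eqref{defn:aaa_finite} for $\omega\in\aaa^\bullet_{r+1}$; and the local monomial construction $w_{i_1}\cdots w_{i_{r+1}}$ patched by a partition of unity is the standard surjectivity argument, legitimate because $\tau_r$ is $C^\infty(X)$-linear. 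One correction to your discussion of the chain-map step: the commutator terms $i^*\Lie_{\tilde\mu_1}\cdots\Lie_{\dbar\tilde\mu_j}\cdots\Lie_{\tilde\mu_{r+1}}\omega$ do \emph{not} ``land in $\aaa^\bullet_{r+1}$'' and are not invisible in the quotient --- they are nonzero in general, and what actually happens is that they reproduce exactly the Leibniz corrections $\tau_r(\omega)(\mu_1\otimes\cdots\otimes\dbar\mu_j\otimes\cdots)$ that appear when $\dbar\tau_r(\omega)$ is evaluated against non-holomorphic test sections of $\normal$; your alternative formulation (that the induced differential on the associated graded is the Dolbeault operator of $S^{r+1}\conormal$) is the correct statement of what must be checked. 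Your device of choosing locally holomorphic extensions $\tilde\mu_j$ kills these commutators outright and reduces the chain-map property to $[\dbar,i^*]=0$, which is the cleanest route, so the misstatement does not damage the argument.
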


\begin{corollary}
  We have a natural isomorphism of dgas
  \begin{displaymath}
    \graded \A^\bullet(\Yhat) \simeq \bigoplus_{n=0}^{\infty} \AOD_X(S^n \conormal).
  \end{displaymath}
\end{corollary}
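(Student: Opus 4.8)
The plan is to bootstrap the two ingredients already in hand: the description of the associated graded dga
\[
  \graded \A^\bullet(\Yhat) \simeq (\AOD(Y)/\aaa^\bullet_0) \oplus \bigoplus_{r=0}^\infty \aaa^\bullet_r/\aaa^\bullet_{r+1}
\]
and the family of cosymbol isomorphisms $\tau_r$ from the preceding Proposition. First I would reindex the right-hand side by the symmetric degree $n$: the bottom summand $\AOD(Y)/\aaa^\bullet_0 \simeq \AOD(X)$ is identified with $\AOD_X(S^0\conormal)$ since $S^0\conormal = \Osheaf_X$, while for $n = r+1 \geq 1$ the summand $\aaa^\bullet_r/\aaa^\bullet_{r+1}$ is identified with $\AOD_X(S^n\conormal)$ via $\tau_r$. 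Assembling these gives a degree-preserving morphism
\[
  \tau = \bigoplus_{n \geq 0} \tau_{n-1} : \graded \A^\bullet(\Yhat) \to \bigoplus_{n=0}^\infty \AOD_X(S^n\conormal),
\]
where $\tau_{-1}$ is read as the restriction isomorphism $\AOD(Y)/\aaa^\bullet_0 \simeq \AOD(X)$. Because each $\tau_r$ is an isomorphism of complexes commuting with $\dbar$, the sum $\tau$ is already an isomorphism of complexes; the content that remains is to check that it is multiplicative, i.e.\ a map of dgas.

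Next I would pin down the two product structures being compared. On the left, the product on $\graded \A^\bullet(\Yhat)$ is induced by the wedge product on $\AOD(Y)$. A short Leibniz-rule check shows this sends $\aaa^\bullet_r \otimes \aaa^\bullet_s$ into $\aaa^\bullet_{r+s+1}$: applying at most $r+s+1$ Lie derivatives to $\omega \wedge \eta$ and restricting, every term gives either at most $r$ derivatives to $\omega$ or at most $s$ to $\eta$, hence restricts to zero by the defining property of the ideals. Thus the wedge descends to a pairing $\aaa^\bullet_r/\aaa^\bullet_{r+1} \otimes \aaa^\bullet_s/\aaa^\bullet_{s+1} \to \aaa^\bullet_{r+s+1}/\aaa^\bullet_{r+s+2}$. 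On the right, the corresponding pairing is the symmetric multiplication $S^{r+1}\conormal \otimes S^{s+1}\conormal \to S^{r+s+2}\conormal$ tensored with the wedge product of Dolbeault forms, and the claim to establish is the identity
\[
  \tau_{r+s+1}(\omega\wedge\eta) = \tau_r(\omega)\cdot\tau_s(\eta), \qquad \omega \in \aaa^\bullet_r,\ \eta \in \aaa^\bullet_s.
\]

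The key computation, which I expect to be the main obstacle, is evaluating both sides on an $(r+s+2)$-tuple of normal vectors $\mu_1,\ldots,\mu_{r+s+2}$ with chosen lifts $\tilde\mu_j$. Expanding $\Lie_{\tilde\mu_1}\cdots\Lie_{\tilde\mu_{r+s+2}}(\omega\wedge\eta)$ by the Leibniz rule produces a sum over all ways of distributing the $r+s+2$ derivatives between $\omega$ and $\eta$. After restriction to $X$, every distribution assigning at most $r$ derivatives to $\omega$ or at most $s$ to $\eta$ dies by the definition of $\aaa^\bullet_r$ and $\aaa^\bullet_s$; since the total is exactly $r+s+2$, only the distributions placing precisely $r+1$ on $\omega$ and $s+1$ on $\eta$ survive. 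Summing these reproduces, after restriction, exactly the symmetrized product $\tau_r(\omega)\cdot\tau_s(\eta)$ evaluated on $\mu_1\otimes\cdots\otimes\mu_{r+s+2}$; here the independence of $\tau_r$ from the ordering of the Lie derivatives (established already in the construction of the cosymbol map) ensures that the reorderings introduced by commutators contribute nothing after restriction to $X$. This establishes multiplicativity, and since the cosymbol maps are defined intrinsically in terms of the embedding, the resulting isomorphism is natural. Together with the first paragraph, this upgrades $\tau$ to the desired isomorphism of dgas.
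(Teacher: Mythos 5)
Your proposal is correct and follows the same route as the paper, which states this corollary as an immediate consequence of the proposition that each $\tau_r$ is an isomorphism of dg-modules and gives no further argument. Your Leibniz-rule verification --- that $\aaa^\bullet_r \wedge \aaa^\bullet_s \subset \aaa^\bullet_{r+s+1}$, and that after restriction to $X$ only the distributions placing exactly $r+1$ Lie derivatives on $\omega$ and $s+1$ on $\eta$ survive, reproducing the (shuffle) product on $\bigoplus_n \AOD_X(S^n \conormal)$ --- is precisely the multiplicativity check the paper leaves implicit, and it is sound.
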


\begin{remark}
 Since $\Osheaf_{\Yhat}$ is defined as an inverse limit sheaf, it has a natural descending filtration and so $\Yhat$ should be regarded not only as a ringed space but also as a topologically ringed space. Similarly, defined as an inverse limit, the Dolbeault dga $\A^\bullet(\Yhat)$ is a filtered dga. Moreover, since $\AOD(Y)$ is a Fr\'echet dga and all ideals $\aaa^\bullet_r$ are closed, the inverse limit $\A^\bullet(\Yhat)$ can be made into a Fr\'echet dga equipped with the initial topology (which concides with the quotient Fr\'echet topology via the quotient map \eqref{eq:Ahat_quotient}). One can recover the formal neighborhood $\Yhat$ from the topological dga $\A^\bullet(\Yhat)$. The topology or the filtration would matter when one wants to consider morphisms between two such dgas or dg-modules over the Dolbeault dgas (see \cite{DolbeaultDGA}). However, these are not among the topics of the current paper. The reader only needs to keep in mind that  in this paper every morphism between two filtered dgas will preserve the filtrations. 
\end{remark}

\subsection{Diagonal embeddings and jet bundles}\label{subsec:diag_jet}

We consider the case of diagonal embeddings. Let $X$ be a complex manifold and let $\Delta: X \hookrightarrow X \times X$ be the diagonal map. For convenience, the associated formal neighborhoods are denoted as $\Diag = \Xdiag$ and $\Diagfinite = \Xdiagfinite$ throughout this section. We denote by $\proj_1, \proj_2 : X \times X \to X$ the projections onto the first and second component of $X \times X$ respectively. From the algebraic perspective, the jet bundle $\JetXfinite$ of order $r$ ($r \geq 0$) can be viewed as the sheaf of algebras
\begin{displaymath}
  \JetXfinite = \proj_{1*} \Osheaf_{\Diagfinite},
\end{displaymath}
and similarly for the jet bundle $\JetX$ of infinite order,
\begin{displaymath}
  \JetX = \proj_{1*} \Osheaf_{\Diag}.
\end{displaymath}
Moreover, they are sheaves of $\Osheaf_X$-modules where the $\Osheaf_X$-actions are induced from the projection $\proj_1$. Analytically, the $\Osheaf_X$-modules $\JetXfinite$ can be regarded as finite dimensional holomorphic vector bundles, while $\JetX$ is a holomorphic vector bundle of infinite dimension and is the projective limit of $\JetXfinite$'s. Fiber of $\JetXfinite$ (resp. $\JetX$) at each point $x \in X$ can be naturally identified with the algebra of holomorphic $r$-jets (resp. $\infty$-jets) of functions at $x$. Thus we can form the sheaf of Dolbeault complexes of $\JetXfinite$,
\begin{displaymath}
  (\Asheaf^{0,\bullet}(\JetXfinite), \dbar) = (\JetXfinite \otimes_{\Osheaf_X} \Asheaf^{0,\bullet}_X, 1 \otimes \dbar),
\end{displaymath}
where $(\Asheaf^{0,\bullet}_X, \dbar)$ is the sheaf of Dolbeault complexes over $X$. Moreover, $(\Asheaf^{0,\bullet}(\JetXfinite), \dbar)$ is also a sheaf of dgas, such that the multiplication is induced by that of $\JetXfinite$ and the wedge product of forms. The sheaf of Dolbeault complexes of $\JetX$ is defined to be
\begin{displaymath}
  \Asheaf^{0,\bullet}(\JetX) = \varprojlim_{r} \Asheaf^{0,\bullet}(\JetXfinite) = \varprojlim_{r} \Asheaf^{0,\bullet}_X \otimes_{\Osheaf_X} \JetXfinite,
\end{displaymath}
which is also a sheaf of dgas. We also denote the global sections of the sheaves by
\begin{displaymath}
  \AOD(\JetXfinite) = \Gamma(X, \Asheaf^{0,\bullet}(\JetXfinite)), \quad \AOD(\JetX) = \Gamma(X, \Asheaf^{0,\bullet}(\JetX)).
\end{displaymath}

The Dolbeault dga $(\A^\bullet(\Diagfinite),\dbar)$ is also an $(\AOD(X),\dbar)$-dga. The $\AOD(X)$-action is given by the compositions of homomorphisms of dgas
\begin{displaymath}
  \AOD(X) \xrightarrow{\proj_1^*} \AOD(X \times X) \to \A^\bullet(\Diagfinite).
\end{displaymath}
Similarly, we have natural homomorphisms of dgas
\begin{displaymath}
  (\AOD(X),\dbar) \to (\A^\bullet(\Diag),\dbar), \quad (\Asheaf^{0,\bullet}_X,\dbar) \to (\Asheaf^\bullet_{\Diag},\dbar), \quad (\Asheaf^{0,\bullet}_X,\dbar) \to (\Asheaf^\bullet_{\Diagfinite}, \dbar),
\end{displaymath}
which are all induced by $\proj_1$. The natural inclusion $\Osheaf_{\Diagfinite} \hookrightarrow \Asheaf_{\Diagfinite}$ can be regarded as the $\Osheaf_X$-linear morphism
\begin{displaymath}
  \JetXfinite \hookrightarrow \Asheaf_{\Diagfinite},
\end{displaymath}
which extends to an $(\Asheaf^{0,\bullet}_X,\dbar)$-linear morphism of sheaves of dgas
\begin{equation}\label{eq:isom_jet_finite}
  I_r: \Asheaf^{0,\bullet}(\JetXfinite) = \Asheaf^{0,\bullet}_X \otimes_{\Osheaf_X} \JetXfinite \to \Asheaf^\bullet_{\Diagfinite}.
\end{equation}
By letting $r$ vary and passing to the inverse limit, we obtain an $(\Asheaf^{0,\bullet}_X,\dbar)$-morphism of sheaves of dgas
\begin{equation}\label{eq:isom_jet_infinite}
  I_{\infty}: \Asheaf^{0,\bullet}(\JetX) \to \Asheaf^\bullet_{\Diag}.
\end{equation}
We use the same notations for maps induced by $I_r$ and $I_{\infty}$ on the global sections.

\begin{proposition}\label{prop:isom_diag_dga}
  The homomorphisms
  \begin{displaymath}
    I_r: \AOD(\JetXfinite) \xrightarrow{\simeq} \A^\bullet(\Diagfinite)
  \end{displaymath}
  and
  \begin{displaymath}
    I_{\infty}: \AOD(\JetX) \xrightarrow{\simeq} \A^\bullet(\Diag).
  \end{displaymath}
  are isomorphisms of $(\AOD(X),\dbar)$-dgas. Similar results hold for the corresponding sheaves.
\end{proposition}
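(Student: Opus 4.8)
The plan is to prove the sheaf statement by a filtered-module argument and to deduce the statement on global sections and the $\infty$-order statement formally. By construction $I_r$ is an $(\Asheaf^{0,\bullet}_X,\dbar)$-linear homomorphism of sheaves of dgas, so it suffices to prove that it is bijective; the dga- and module-structures are then automatic. Both sides carry natural finite exhaustive filtrations. On the target, $\Asheaf^\bullet_{\Diagfinite}$ is filtered by the images of the sheaves of dg-ideals $\widetilde{\aaa}^\bullet_s$, $0 \le s \le r$. On the source I filter $\JetXfinite = \proj_{1*}\Osheaf_{\Diagfinite}$ by the order subsheaves $F^p = \proj_{1*}(\Isheaf^{p}/\Isheaf^{r+1})$, whose graded pieces $F^p/F^{p+1} \cong S^p\conormal$ are locally free; tensoring the order filtration over $\Osheaf_X$ with $\Asheaf^{0,\bullet}_X$ then gives a filtration of $\Asheaf^{0,\bullet}(\JetXfinite)$ whose associated graded is $\bigoplus_{p=0}^r \Asheaf^{0,\bullet}(S^p\conormal)$, local freeness guaranteeing that tensoring keeps the defining short exact sequences exact.

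The heart of the argument is that $I_r$ is filtered and induces an isomorphism on associated gradeds. A holomorphic function in $\Isheaf^{s+1}$ vanishes to order $s+1$ along the diagonal, so all of its iterated Lie derivatives along $(1,0)$-fields of length at most $s$ restrict to zero on $X$; hence $I_r$ carries $\Asheaf^{0,\bullet}_X \otimes_{\Osheaf_X} F^{s+1}$ into $\widetilde{\aaa}^\bullet_s$ and is filtered, with a degree shift matching the indexing of $\tau_s$. To identify the graded map I would compute $\tau_s \circ \graded I_r$ on $F^{s+1}/F^{s+2}$ directly: for a jet represented by a holomorphic $f$ and $(1,0)$-fields $\tilde{\mu}_1, \dots, \tilde{\mu}_{s+1}$, the expression $i^*\Lie_{\tilde{\mu}_1}\cdots\Lie_{\tilde{\mu}_{s+1}} f$ extracts precisely the order-$(s+1)$ Taylor symbol of $f$, i.e. the canonical isomorphism $F^{s+1}/F^{s+2} \xrightarrow{\sim} S^{s+1}\conormal$. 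Under the identifications of both gradeds with $\bigoplus_p \Asheaf^{0,\bullet}(S^p\conormal)$ — via the cosymbol maps $\tau_s$ on the target and via the order filtration on the source — the map $\graded I_r$ is thus the identity, in particular an isomorphism.

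With $\graded I_r$ an isomorphism, a downward induction on the finite length of the filtration, using the five lemma on the short exact sequences comparing consecutive filtration steps and their quotients, shows that $I_r$ is an isomorphism of sheaves; applying $\Gamma(X,-)$ then gives the isomorphism of global sections. For the infinite-order case, $\Asheaf^{0,\bullet}(\JetX)$ and $\Asheaf^\bullet_{\Diag}$ are by definition the inverse limits of the order-$r$ objects along surjective transition maps, and $I_\infty = \varprojlim_r I_r$; since each $I_r$ is an isomorphism compatible with these maps, $\varprojlim_r I_r^{-1}$ is a two-sided inverse, so $I_\infty$ is an isomorphism, and likewise on global sections.

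I expect the main obstacle to be the symbol computation in the second paragraph: reconciling, on the nose, the differential-geometric cosymbol map $\tau_s$ built from iterated Lie derivatives with the purely algebraic Taylor-coefficient description of the conormal graded of the jet bundle. This is the single point where the two descriptions must be matched, and the care lies in keeping the symmetrizations and the order/degree indexing consistent between the two constructions; everything else is formal filtration bookkeeping together with a routine inverse-limit argument.
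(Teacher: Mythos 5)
Your argument is correct and is essentially the paper's own proof in a slightly different wrapper: the paper inducts on the order $r$ of the neighborhood using the five lemma on the short exact sequences $0 \to S^{r+1}T^*X \to \mathcal{J}^{r+1}_X \to \mathcal{J}^r_X \to 0$ and their counterparts for $\A^\bullet(\Diagfinite)$, with the cosymbol map $\tau_r$ identifying the graded pieces, which is exactly your filtration-plus-associated-graded induction carried out one order at a time. The infinite-order case is likewise handled by passing to the inverse limit in both treatments.
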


\begin{proof}
  We prove the proposition by induction on $r \geq 0$. When $r=0$, both $\AOD(\mathcal{J}^{0}_X)$ and $\A^\bullet(\Delta^{\upscript{(0)}})$ can be identified with $\AOD(X)$ and $I_0$ is an isomorphism. Now assume that $I_r$ is an isomorphism. We have for any $k \geq 0$ a natural isomorphism $\Isheaf^k / \Isheaf^{k+1} \simeq S^k T^*X$ of sheaves of $\Osheaf_X$-modules similar to $\tau_r$ in \eqref{eq:isom_assoc_graded} and hence a short exact sequence
  \begin{displaymath}
    0 \to S^k T^*X \to \mathcal{J}^k_X \to \mathcal{J}^{k-1}_X \to 0.
  \end{displaymath}
  Now there is a commutative diagram
  \begin{diagram}
    & 0 & \rTo & \AOD(S^{r+1}T^*X)   & \rTo & \AOD(\mathcal{J}^{r+1}_X) & \rTo & \AOD(\mathcal{J}^r_X) & \rTo & 0  \\
    &   &      & \dTo_{\tau_r^{-1}}  &      & \dTo_{I_{r+1}}            &      & \dTo_{I_r}   \\
    & 0 & \rTo & \aaa^\bullet_{r} / \aaa^\bullet_{r+1} & \rTo & \A^\bullet(\Delta^{\upscript{(r+1)}}) & \rTo & \A^\bullet(\Delta^{\upscript{(r)}}) & \rTo & 0
  \end{diagram}
  of two rows of short exact sequences. The leftmost vertical map is exactly the inverse of $\tau_r$ in \eqref{eq:isom_assoc_graded}, where the conormal bundle of the diagonal is identified with the cotangent bundle of $X$. The rightmost vertical map is $I_r$, which is an isomorphism by the inductive hypothesis. Therefore, by the five lemma, $I_{r+1}$ in the middle is also an isomorphism.
\end{proof}

\section{Kapranov's theorem revisited}\label{sec:Kapranov}

\subsection{Conventions on the symmetric algebra}\label{subsec:symmetric}

We recall a basic yet useful observation from \cite{ArinkinCaldararu}. Let $V$ be a finite dimensional vector space. The free coalgebra $T^{c}(V) = \oplus_{k \geq 0} V^{\otimes k}$ generated by $V$ is equipped with a commutative algebra structure, give by the shuffle product
\[ (v_1 \otimes \cdots \otimes v_p) \cdot (v_{p+1} \otimes \cdots \otimes v_{p+1}) = \sum_{\sigma \in \Sh(p,q)} v_{\sigma(1)} \otimes \cdots \otimes v_{\sigma(p+q)}, \]
where $\Sh(p,q)$ is the set of all $(p,q)$-shuffles.

The symmetric algebra $S(V) = \oplus_{k \geq 0} S^k V$ is naturally a subalgebra and quotient algebra of $T^c(V)$ via the inclusion
\begin{equation}\label{eq:sym_inclusion}
  S(V) \to T^c(V), \quad v_1 v_2 \cdots v_k \mapsto \sum_{\sigma \in \Sigma_k} v_{\sigma(1)} \otimes v_{\sigma(2)} \otimes \cdots \otimes v_{\sigma(k)} 
\end{equation}
and the surjection
\begin{equation}\label{eq:sym_quotient}
  T^c(V) \to S(V), \quad v_1 \otimes v_2 \otimes \cdots \otimes v_k \mapsto \frac{1}{k!} v_1 v_2 \cdots v_k.
\end{equation}
Both are homomorphism of commutative algebras and moreover the composition
\[ S(V) \to T^c(V) \to S(V) \]
is the identity. The same discussion applies to the completions $\hat{T}^c(V)$ and $\hat{S}(V)$ with respect to the natural gradings. From now on, we will always think of $\hat{S}(V)$ as a subalgebra of $T^c(V)$ via the inclusion defined above.

\subsection{Kapranov's theorem and holomorphic exponential map}\label{subsec:Kapranov}

We now reformulate and provide an alternative proof of Kapranov's theorem on the concrete description of the formal neighborhood of a diagonal embedding in terms of the Atiyah class. Suppose that $X$ is equipped with a K\"ahler metric $h$. Let $\nabla$ be the canonical $(1,0)$-connection in $TX$ associated with $h$, so that
\begin{equation}\label{eq:flatness}
  [\nabla,\nabla] = 0 ~\text{in}~ \A^{2,0}_X(\End(TX)).
\end{equation}
and it is torsion-free, which is equivalent to the condition for $h$ to be K\"ahler.

Set $\widetilde{\nabla} = \nabla + \dbar$, where $\dbar$ is the $(0,1)$-connection defining the complex structure. The curvature of $\widetilde{\nabla}$ is
\begin{equation}\label{defn:AtiyahClass}
  R = [\dbar, \nabla] \in \A^{1,1}_X(\End(TX)) = \A^{0,1}_X(\Hom(TX \otimes TX, TX)).
\end{equation}
In fact, by the torsion-freeness we have
\begin{displaymath}
  R \in \A^{0,1}_X(\Hom(S^2TX,TX)).
\end{displaymath}
We have the Bianchi identity
\begin{displaymath}
  \dbar R = 0 ~ \text{in} ~ \A^{0,2}_X(\Hom(S^2 TX, TX)).
\end{displaymath}
Thus $R$ defines a cohomology class in $\gext^1_{\Osheaf_X}(S^2 TX, TX)$, which is the \emph{Atiyah class} $\alpha_{TX}$ of the tangent bundle (\cite{Atiyah}). It is the obstruction for the existence of a holomorphic $(1,0)$-connection on $TX$.

Now define tensor fields $R_n$, $n \geq 2$, as higher covariant derivatives of the curvature:
\begin{equation}\label{defn:Derivative_AtiyahClass}
  R_n \in \A^{0,1}_X(\Hom(S^2TX \otimes TX^{\otimes(n-2)}, TX)), \quad R_2:=R, \quad R_{i+1}=\nabla R_i.
\end{equation}
In fact $R_n$ is totally symmetric, i.e.,
\begin{displaymath}
  R_n \in \A^{0,1}_X(\Hom(S^n TX,TX)) = \A^{0,1}_X(\Hom(T^*X, S^n T^*X))
\end{displaymath}
by the flatness of $\nabla$ \eqref{eq:flatness}. Note that if we think of $\nabla$ as the induced connection on the cotangent bundle, the same formulas \eqref{defn:AtiyahClass} and \eqref{defn:Derivative_AtiyahClass} give $-R_n$.

Kapranov observed that one can define a differential
  \begin{equation}\label{eq:Kapranov_differential}
    D = \dbar + \sum_{n \geq 2} \tilde{R}_n
  \end{equation}
on the graded algebra $\AOD_X(\Shat(T^*X))$, where $\tilde{R_n}$ is the odd derivation of $\AOD_X(\Shat(T^*X))$ induced by $R_n \in \A^{0,1}_X(\Hom(T^*X, S^n T^*X))$. (Note that in \cite{Kapranov} Kapranov used the notation $\tilde{R}^*_n$.) By purely algebraic properties of $R_n$'s, it was shown that $D^2 = 0$. In other words, $(\AOD_X(\Shat(T^*X), D)$ is a dga. Kapranov gave a geometric intepretation of this dga (following the suggestion by Ginzburg). Namely, he constructed a `holomorphic exponential map'
  \[ \expmap: X\formal_{TX} \to \Xdiag \]
using the Levi-Civita connection of $X$, where $X\formal_{TX}$ is the formal neighborhood of $X$ (regarded as the zero section) in the total space of $TX$. The exponential $\expmap$ is a fiberwise holomorphic morphism between (nonlinear) holomorphic fiber bundles, but it is not holomorphic when the base point on $X$ moves around. If we think of $\Shat(T^*X)$ as sheaves of functions on $X\formal_{TX}$, then $D^2 = 0$ means that $D$ defines a new holomorphic structure on $X\formal_{TX}$, which is exactly the holomorphic structure on $\Xdiag$ pushed forward via $\expmap$. In other words, the pullback map via $\expmap$ induces an isomorphism of dgas
\begin{equation}\label{eq:Kapranov_isom}
  \expmap^*: (\AOD_X(\JetX),\dbar) \to (\AOD_X(\Shat(T^*X)), D).
\end{equation}

As mentioned in the Introduction, we will provide an alternative proof of Kapranov's theorem by writing down directly a formula for the pullback map $\expmap^*$. The reason why it coincides with Kapranov's original construction of the exponential map will be clear in Section \ref{sec:Formal}. 

Since we have shown in Section \ref{subsec:diag_jet} that our Dolbeault dga $(\A^\bullet(\Xdiag), \dbar)$ is isomorphic to $(\AOD_X(\JetX), \dbar)$, we can work with the former. Define
\begin{displaymath}
  \expmap^* : \A^\bullet(\Xdiag) \xrightarrow{\simeq} \AOD_X(\Shat(T^*X))
\end{displaymath}
by
\begin{equation}\label{eq:exp_Kahler}
  \expmap^* ([\eta]_\infty) = (\Delta^*\eta, \Delta^*\nabla \eta, \Delta^*\nabla^2 \eta, \cdots, \Delta^*\nabla^n \eta, \cdots),
\end{equation}
where $\eta \in \AOD(X \times X)$ and $[\eta]_\infty$ is its image in $\A^\bullet(\Xdiag)$. Here $\nabla$ is understood as the pullback of the original $\nabla$ of $T^*X$ to $X \times X$ via $\proj_2: X \times X \to X$. In other words, it is now a constant family of connections on the trivial fiber bundle $\proj_1 : X \times X \to X$ which act only in the direction of the fibers (i.e., the second factor of $X \times X$). By $\nabla \eta$ we mean the $(1,0)$-differential $\partial_2 \eta$ of $\eta$ in the direction of the second factor of $X \times X$ and $\nabla^n \eta = \nabla^{n-1} \partial_2 \eta$. 

The map $\expmap^*$ is an isomorphism of filtrated graded algebras since it induces the identity map on the associated graded algebras (both equal to $\AOD_X(S(T^*X))$).

\begin{remark}
  The reader might think that we should add the factor $1/k!$ in each component of the formula \eqref{eq:exp_Kahler} so that it will look the same as the usual Taylor expansion. Yet what is really going on here is that \emph{a priori} $(\Delta^* \nabla^k \eta)_{k \geq 0}$ belongs to $\AOD_X(\hat{T}^c(T^*X))$, then by the torsion-freeness and flatness of $\nabla$ we know it actually lies in the symmetic part of $\AOD_X(\hat{T}^c(T^*X))$. To extract it as an element in $\AOD_X(\hat{S}(T^*X))$ we apply the quotient map \eqref{eq:sym_quotient} in Section \ref{subsec:symmetric} to $\hat{T}^c(T^*X)$ part and the factors $1/k!$ arise exactly from there. In particular, when $X = \complex^n$ with the flat K\"ahler metric, we get the usual Taylor expansions. We will not repeat this point when similar situations appear later.
\end{remark}

\begin{theorem}[compare with Theorem 2.8.2, \cite{Kapranov}]\label{thm:Kapranov_Diagonal}
  Assume that $X$ is K\"ahler. The map
  \begin{displaymath}
    \expmap^* : (\A^\bullet(\Xdiag),\dbar) \to (\AOD_X(\Shat(T^*X)), D)
  \end{displaymath}
  is an isomorphism of dgas, where $D$ is as defined in \eqref{eq:Kapranov_differential}. In particular, $D^2 = 0$. 
\end{theorem}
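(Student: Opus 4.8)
The plan is to verify directly that $\expmap^*$ as defined in \eqref{eq:exp_Kahler} intertwines the two differentials, i.e.\ that
\begin{equation}\label{eq:intertwine_goal}
  \expmap^* \circ \dbar = D \circ \expmap^*,
\end{equation}
where on the left $\dbar$ is the differential on $(\A^\bullet(\Xdiag),\dbar)$ and on the right $D = \dbar + \sum_{n \geq 2} \tilde{R}_n$ acts on $\AOD_X(\Shat(T^*X))$. Since we have already remarked that $\expmap^*$ is an isomorphism of filtered graded algebras (inducing the identity on the associated gradeds $\AOD_X(S(T^*X))$), once \eqref{eq:intertwine_goal} is established the map is automatically an isomorphism of dgas, and the identity $D^2 = \expmap^* \circ \dbar^2 \circ (\expmap^*)^{-1} = 0$ follows for free. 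So the entire content of the theorem reduces to the commutation relation \eqref{eq:intertwine_goal}, and it suffices to check it on representatives: for $\eta \in \AOD(X \times X)$, compute both $\expmap^*([\dbar \eta]_\infty)$ and $D(\expmap^*([\eta]_\infty))$ componentwise and match them degree by degree in the $\Shat(T^*X)$-grading.

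First I would unwind the two sides at the level of each symmetric-power component. On the left, the $n$-th component of $\expmap^*([\dbar\eta]_\infty)$ is $\Delta^* \nabla^n(\dbar\eta)$, where $\nabla$ denotes the pulled-back $(1,0)$-connection acting purely in the second (fiber) direction. On the right, the $n$-th component of $D(\expmap^*[\eta]_\infty)$ picks up two types of contributions: the "diagonal" term $\dbar$ applied to the $n$-th component $\Delta^*\nabla^n\eta$, plus the curvature corrections $\tilde{R}_k$ which, being derivations lowering one symmetric-power slot and raising by $k$, feed lower components $\Delta^*\nabla^m\eta$ (with $m = n-k+1$) into the $n$-th slot. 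Matching these amounts to a Leibniz-type identity comparing $\Delta^*\nabla^n \dbar\eta$ with $\dbar \Delta^*\nabla^n\eta$ and correction terms. The essential input is that $\nabla$ and $\dbar$ do \emph{not} commute: their graded commutator on $TX$ (equivalently $T^*X$) is precisely the curvature $R$, and iterating this past the connection produces the higher $R_n = \nabla^{n-2}R$. Concretely, I would prove by induction on $n$ a clean formula for the commutator $[\dbar, \nabla^n]$ acting on $\Delta^*\eta$, expanding it in terms of the $R_k$'s; the combinatorial bookkeeping of which slot the curvature lands in is what reproduces the shape of $\sum_k \tilde{R}_k$.

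The main obstacle I anticipate is twofold and essentially combinatorial. First, one must carefully track the passage between $\hat{T}^c(T^*X)$ and $\hat{S}(T^*X)$: as noted in the Remark, the tuple $(\Delta^*\nabla^k\eta)_k$ naturally lives in the tensor-coalgebra and the factors of $1/k!$ enter only upon applying the symmetrization/quotient map \eqref{eq:sym_quotient}. Keeping the $\tilde{R}_n$ on the symmetric side consistent with covariant differentiation on the tensor side is where sign and factorial errors are most likely to creep in. Second, the verification of the commutator expansion genuinely uses both structural facts about $\nabla$: torsion-freeness guarantees that $(\Delta^*\nabla^k\eta)_k$ is symmetric so that it descends to $\Shat(T^*X)$ at all, and flatness $[\nabla,\nabla]=0$ together with the Bianchi identity $\dbar R = 0$ ensures that the higher derivatives $R_n = \nabla^{n-2}R$ remain totally symmetric and that no spurious $(0,2)$-form or non-symmetric terms survive. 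I would isolate these two facts as the geometric heart of the argument and organize the induction so that each appeal to flatness, torsion-freeness, or the Bianchi identity is made explicit, leaving only the routine (if tedious) coalgebra combinatorics to finish.
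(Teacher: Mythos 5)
Your proposal is correct and follows essentially the same route as the paper: both reduce the theorem to the intertwining relation $\expmap^*\circ\dbar = D\circ\expmap^*$ (granting the already-established filtered algebra isomorphism) and establish it by expanding the commutator of $\dbar$ with $\nabla^n$ in terms of $\Upsilon=[\dbar,\nabla]=\proj_2^*(-R)$ and its covariant derivatives, then applying $\Delta^*$. The only quibble is that total symmetry of the $R_n$ comes from flatness and torsion-freeness alone, with the Bianchi identity not actually needed for the commutation argument itself.
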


\begin{proof}
 We need to show that
   \begin{equation}\label{eq:exp_commutator}
     \dbar \circ \expmap^* - \expmap^* \circ ~\dbar  = - (\sum_{n \geq 2} \tilde{R}_n) \circ \expmap^*. 
   \end{equation}
 Let
  \begin{displaymath}
    \Upsilon = [\dbar,\nabla] \in \A^{0,1}_{X \times X} (\Hom(\proj_2^* T^*X, S^n(\proj_2^* T^*X)))
  \end{displaymath}
  and
  \begin{displaymath}
    \widetilde{\Upsilon} \in \A^{0,1}_{X \times X}(\Hom(\Shat^\bullet(\proj_2^* T^*X), \Shat^{\bullet+1}(\proj_2^* T^*X)))
  \end{displaymath}
  the derivation on $\A^{0,\bullet}_{X \times X} (\Shat^{\bullet}(\proj_2^* T^*X))$ induced by $\Upsilon$ which increases the degree on $\Shat^\bullet$ by $1$. Here again by $\nabla$ we mean the pullback $\proj_2^* \nabla$ acting on the cotangent bundle. But it is constant in the direction of the first factor of $X \times X$. So if we decompose $\dbar = \dbar_1 + \dbar_2$ according to the product $X \times X$, we get
  \begin{displaymath}
    \Upsilon = [\dbar,\nabla] = [\dbar_2, \nabla] = \proj_2^*(-R).
  \end{displaymath}
  For any $[\eta]_\infty \in \A^\bullet(\Xdiag)$, we have
  \begin{displaymath}
    \nabla^n \dbar \eta = \dbar \nabla^n \eta - \sum_{i+j=n-2} \nabla^i \circ \widetilde{\Upsilon} \circ \nabla^j \eta.
  \end{displaymath}
  By evaluating $\widetilde{\Upsilon}$ and expanding the action of $\nabla^i$ via the Leibniz rule, we find
  \begin{displaymath}
    \nabla^n \dbar \eta = \dbar \nabla^n \eta - \sum_{k=0}^{n-1} \widetilde{\nabla^{k} \Upsilon} \circ \nabla^{n-k-1} \eta,
  \end{displaymath}
  where $\widetilde{\nabla^k \Upsilon}$ is the derivation induced by $\nabla^k \Upsilon$. Finally by applying $\Delta^*$ on both sides we obtain \eqref{eq:exp_commutator}, since $\Delta^* \nabla^k \Upsilon = \Delta^* \proj_2^* (-R_k) = -R_k$. 
\end{proof}

\subsection{An application}\label{subsec:application}

As an application of the formula \eqref{eq:exp_Kahler} (yet in a more general context), we prove the following result claiming essentially that the Atiyah class is the only obstruction for the existence of a biholomorphism between $\Xdiag$ and $X\formal_{TX}$, which might look surprising at first sight.

\begin{theorem}\label{thm:jet_Atiyah}
  Let $X$ be an arbitrary complex manifold (not necessarily K\"ahler). Then there exists an $(\AOD(X),\dbar)$-linear isomorphism of dgas
  \[ (\A^\bullet(\Xdiag),\dbar) \xrightarrow{\simeq} (\AOD_X(\Shat(T^*X)), \dbar) \] 
preserving the natural filtrations on both sides and inducing the identity map on the associated graded algebras (both equal to $\AOD_X(S(T^*X))$), if and only if the Atiyah class $\alpha_{TX}$ of $TX$ vanishes.
\end{theorem}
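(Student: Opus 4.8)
The theorem is an ``if and only if'': I need to show the existence of the filtered dga isomorphism is equivalent to the vanishing of the Atiyah class $\alpha_{TX} \in \gext^1_{\Osheaf_X}(S^2 TX, TX)$. The forward direction (existence $\Rightarrow$ vanishing) should be the easier one, while the converse (vanishing $\Rightarrow$ existence) is where the real work lies, and it is here that the formula \eqref{eq:exp_Kahler} enters, now stripped of the K\"ahler hypothesis.

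For the forward direction, suppose we are given such an isomorphism $\Phi: (\A^\bullet(\Xdiag),\dbar) \to (\AOD_X(\Shat(T^*X)),\dbar)$ that is filtered and induces the identity on the associated graded $\AOD_X(S(T^*X))$. The key point is that the right-hand differential is the \emph{honest} $\dbar$ coming from some holomorphic structure, so conjugating the left-hand $\dbar$ through $\Phi$ exhibits $(\AOD_X(\Shat(T^*X)),\dbar)$ as a genuine Dolbeault complex. Restricting attention to the order-$\le 1$ truncation, i.e. to $\A^\bullet(\Xdiagfinite)$ for $r=1$, and using the isomorphism $\A^\bullet(\Diagfinite) \simeq \AOD(\JetXfinite)$ from Proposition~\ref{prop:isom_diag_dga}, the existence of such a $\Phi$ forces the short exact sequence
\begin{displaymath}
  0 \to S^2 T^*X \to \mathcal{J}^2_X \to \mathcal{J}^1_X \to 0
\end{displaymath}
to split holomorphically in a way compatible with the filtration. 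I would argue that the extension class of the first-order jet sequence (more precisely, the class governing the splitting of the order-two sequence as holomorphic bundles) is exactly $\alpha_{TX}$, so the existence of $\Phi$ inducing the identity on $\graded$ forces this class to vanish.

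For the converse, assume $\alpha_{TX} = 0$. Then there exists a Dolbeault one-form $\beta \in \A^{0,0}_X(\Hom(S^2 TX, TX))$ --- no, more precisely an element whose $\dbar$ realizes the Atiyah representative --- so that we may choose a $(1,0)$-connection $\nabla$ on $TX$ with $R = [\dbar,\nabla]$ exact, i.e. $R = \dbar \gamma$ for some $\gamma \in \A^{0,0}_X(\Hom(S^2TX,TX))$. The plan is to \emph{correct} the naive connection so as to kill the curvature representative: replacing $\nabla$ by $\nabla - \gamma$ (and, if necessary, its higher analogues) produces a connection whose associated $R_n$ all vanish. Feeding such a connection into the formula \eqref{eq:exp_Kahler} --- which is purely differential-geometric and never used the K\"ahler condition in its \emph{definition}, only in the \emph{computation} of $D$ --- yields a candidate isomorphism $\expmap^*$. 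The computation in the proof of Theorem~\ref{thm:Kapranov_Diagonal} then shows that the conjugated differential is $\dbar + \sum_{n\ge 2}\tilde R_n$; since every $R_n$ vanishes for the corrected connection, this collapses to the honest $\dbar$, giving the desired dga isomorphism. That $\expmap^*$ is filtered and induces the identity on $\graded$ is immediate from the form of \eqref{eq:exp_Kahler}.

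**Main obstacle.** The delicate point is the converse, and specifically the passage from ``$\alpha_{TX}=0$'' to ``a connection with all $R_n$ vanishing.'' Vanishing of the \emph{cohomology class} only gives $R = \dbar\gamma$ for the lowest term $R_2$; it is not automatic that the higher covariant derivatives $R_n = \nabla^{n-2}R$ can be simultaneously trivialized, since flatness and torsion-freeness (which forced symmetry and the clean Leibniz bookkeeping in the K\"ahler proof) are no longer available. I expect the resolution to be iterative: correct $\nabla$ order by order along the filtration of $\Shat(T^*X)$, at each stage absorbing the obstruction into a redefinition of the connection, and check that the ambiguity at each step is again controlled by $\alpha_{TX}$ (which has been assumed to vanish) rather than by a genuinely new class. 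The bookkeeping of symmetrization via the shuffle/quotient maps of Section~\ref{subsec:symmetric} will be essential to ensure the corrected object still lands in $\Shat(T^*X)$ rather than merely in $\hat T^c(T^*X)$.
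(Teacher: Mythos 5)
Your overall strategy coincides with the paper's. The forward direction is exactly the paper's argument: the filtered isomorphism inducing the identity on $\graded$ splits $0 \to S^2T^*X \to \mathcal{J}^2_X \to \mathcal{J}^1_X \to 0$ holomorphically, hence splits $0 \to S^2T^*X \to \Isheaf/\Isheaf^3 \to T^*X \to 0$, and the class obstructing that splitting is $\alpha_{TX}$ (the paper cites Kapranov's Proposition 2.2.1 for this; note the relevant truncation is order $2$, not order $\le 1$ as you wrote). For the converse, your ``main obstacle'' is mostly a non-issue, and recognizing this collapses your proof to the paper's. Your own correction $\nabla \mapsto \nabla - \gamma$ with $R = \dbar\gamma$ produces a connection with $[\dbar,\nabla] = 0$ \emph{identically} --- this is precisely Atiyah's classical statement that $\alpha_{TX}=0$ iff a holomorphic $(1,0)$-connection exists --- and then every $R_n = \nabla^{n-2}R$ vanishes on the nose, so no order-by-order iteration along the filtration is needed. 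The only real issue is the one you name in your last sentence: without flatness and torsion-freeness the Taylor coefficients $(\Delta^*\nabla^k\eta)_{k\ge 0}$ are not symmetric tensors, so formula \eqref{eq:exp_Kahler} does not land in $\AOD_X(\Shat(T^*X))$. The paper's resolution is a single step, not an iteration: define
\begin{displaymath}
  I([\eta]_\infty) = (\Delta^*\nabla^k\eta)_{k\ge 0} \in \AOD_X(\hat{T}^c(T^*X)),
\end{displaymath}
which is a map of graded commutative algebras for the shuffle product and commutes with $\dbar$ precisely because $[\dbar,\nabla]=0$; then compose with the symmetrization quotient \eqref{eq:sym_quotient}, which also commutes with $\dbar$, preserves filtrations, and induces the identity on $\graded$ since symmetrization does not change the symbols of the operators $\nabla^k$. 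So: keep your correction of the connection, drop the iterative scheme, and promote the ``bookkeeping'' remark at the end of your proposal from a side concern to the actual final step of the proof.
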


\begin{proof}
  Assume such an isomorphism exists. In particular, it gives a holomorphic splitting of the short exact sequence
  \[ 0 \to S^2 T^*X \to \mathcal{J}^2_X  \to \mathcal{J}^1_X  \to 0 \]
and hence a holomorphic splitting for
  \[ 0 \to S^2 T^*X \to \Isheaf / \Isheaf^3 \to \Isheaf / \Isheaf^2 = T^*X \to 0, \]
where $\Isheaf \subset \Osheaf_{X \times X}$ is the ideal sheaf of functions vanishing along the diagonal. Therefore the Atiyah class vanishes by Proposition 2.2.1., \cite{Kapranov}.

  On the other hand, if the Atiyah class of $TX$, or equivalently, of $T^*X$ vanishes, then there exists a holomorphic connection $\nabla$ on $T^*X$ (cf. \cite{Atiyah}). We define a map from $\A^\bullet(\Xdiag)$ to the Dolbeault resolution of the sheaf of completed free coalgebras $\hat{T}^c(T^*X) \allowbreak = \prod_{k \geq 0} (T^*X)^{\otimes k}$,
  \[ I: \A^\bullet(\Xdiag) \to \AOD_X(\hat{T}^c(T^*X)) \]
by the formula
  \[ I([\eta]_{\infty}) = (\Delta^* \nabla^k \eta)_{k \geq 0} \in \AOD_X(\hat{T}^c(T^*X)) \]
which is similar to \eqref{eq:exp_Kahler} and $\nabla$ here is again understood as acting on the second factor of $X \times X$, etc. $I$ is a morphism of graded commutative algebras if we equip $\hat{T}^c(T^*X)$ with the shuffle product. Moreover, it commutes with the $\dbar$-derivation on both sides since $[\dbar, \nabla] = 0$. Thus $I$ is a morphism of dgas. 

The image of $I$ is no longer guaranteed to lie in the symmetric algebra since we do not have additional assumption on $\nabla$ as before. Yet we can compose $I$ with the quotient map \eqref{eq:sym_quotient} to get another morphism of graded algebras
\[ \tilde{I} : \A^\bullet(\Xdiag) \xrightarrow{I} \AOD_X(\hat{T}^c(T^*X)) \to \AOD_X(\Shat(T^*X)). \]
Since the quotient map $T^c(T^*X) \to \Shat(T^*X)$ is essentially the symmetrization map, it commutes with the $\dbar$-derivations on both sides induced by that of $T^*X$ and hence $\tilde{I}$ is a morphism of dgas. It also preserves the filtrations and its induced map on the associated graded algebras is the identity map, since symmetrization does not affect the symbols of the differential operators $\nabla^k$. Therefore $\tilde{I}$ fullfills the required conditions.
\end{proof}

\begin{digression}
  The result of Theorem \ref{thm:jet_Atiyah} is parallel to the work of Arinkin and C{\u{a}}ld{\u{a}}raru \cite{ArinkinCaldararu}. They showed there that, for a closed embedding $i: X \hookrightarrow Y$ of schemes, there exists an isomorphism
  \[i^*i_* \Osheaf_X  \simeq \bigoplus_k \wedge^k \conormal[k] \] 
of algebra objects in the derived category $D(X)$ of $X$, where $\conormal$ is the conormal bundle, if and only if a certain class $\alpha_N \in \gext^2_X(\wedge^2 \normal, \normal) = \gext^1_X(S^2(\normal[-1]), \normal[-1])$ (which they called the HKR class) of the embedding vanishes. $\alpha_N$ can be thought of as a relative version of Atiyah class of the tangent complex of the embedding, which is $\normal[-1]$. Moreover, they showed that $\alpha_N$ is the obstruction class for the existence of a holomorphic extension of the normal bundle $\normal$ to the first-order formal neighborhood $X^{\upscript{(1)}}_Y$. From the viewpoint of derived algebraic geometry (in the sense of Lurie \cite{Lurie}), $i^*i_* \Osheaf_X$ can be regarded as the structure complex of the derived self-intersection $X \times^R_Y X$.

In the context of Theorem \ref{thm:jet_Atiyah}, the corresponding morphism of spaces behind the scene is $X \to X_{dR}$. $X_{dR}=(X, \Omega^\bullet_X)$ is the de Rham space of $X$, which is $X$ equipped with the sheaf of de Rham complex $\Omega^\bullet_X$, regarded as a dg-manifold. The canonical map $X \to X_{dR}$ arises from the quotient map $\Omega^\bullet_X \to \Osheaf_X$. The derived fiber product $X \times^R_{X_{dR}} X$ is exactly $\Xdiag$. The candidate for the 'first-order formal neighborhood' in this case seems to be $X'_{dR} = (X, \Omega^{\leq 1}_X)$, where $\Omega^{\leq 1}_X$ is the truncated de Rham complex $\Osheaf_X \xrightarrow{d} \Omega^1_X$. The tangent complex of the map $X \to X_{dR}$ is nothing but $TX$ and the existence of an extension of the vector bundle $TX$ to $X'_{dR}$ is clearly equivalent to the existence of a holomorphic connection of $TX$ and hence equivalent to the vanishing to the usual Atiyah class of $TX$.  

The comparison between these two situations suggests that there should be an alternative proof of Theorem \ref{thm:jet_Atiyah} which is parallel to that in \cite{ArinkinCaldararu}. This was done in the paper of Calaque \cite{Calaque} in a more general context of an inclusion $A \subset L$ of sheaves of Lie algebroids over a space $X$ with a sheaf of algebras $R$, first considered by Chen, Sti\'enon and Xu \cite{ChenStienonXu}. The Chen-Sti\'enon-Xu class $\alpha_{L/A} \in \gext^1_A ((L/A) \otimes_R (L/A), L/A)$ defined in \cite{ChenStienonXu} recovers in particular the Atiyah class when $L = TX_{\real} \otimes \complex$ is (the sheaf of smooth sections of) the complexified tangent bundle of a complex manifold $X$ and $A=T^{0,1}X$. Calaque then defined for such an inclusion the first formal neighborhood $A^{(1)}$ as certain quotient of the free algebroid $FR(L)$ generated by $L$ (after Kapranov \cite{FreeAlgebroid}), and showed that the following statements are equivalent:
\begin{enumerate}
  \item
    The Chen-Stienon-Xu class $\alpha_{L/A}$ for $(L,A)$ vanishes.
  \item
    The $A$-module structure $L/A$ lifts to an $A^{(1)}$-module structure.
  \item
    $U(L) / U(L) A$ is isomorphic, as a filtrated $A$-module, to $S_R(L / A)$, where $U(L)$ is the universal enveloping algebra of $L$ and $S_R(L/A)$ is the sheaf of symmetric algebras generated by $L/A$.
\end{enumerate}
In our situation where $R = \Osheaf_X$, $L = TX_{\real} \otimes \complex$ and $A = T^{0,1} X$, the sheaf $U(L) / U(L) A$ in the third condition above is nothing but the universal enveloping algebra $U(T^{1,0}X) = U(TX)$ of the Lie algebroid $T^{1,0} X = TX$ of smooth $(1,0)$-vector fields, with the $A$-module structure being the one induced by the holomorphic structure. Since $\JetX \simeq \Hom_{\Osheaf_X} (U(TX), \Osheaf_X)$ and $\Shat(TX) \simeq \Hom_{\Osheaf_X}(S(TX), \Osheaf_X)$, the equivalence between conditions (1) and (3) is essentially the same as our Theorem \ref{thm:jet_Atiyah}. Also one can think formally of the first formal neighborhood $A^{(1)}$ in this case as the 'Koszul dual' of the somewhat naive dga $\Omega^{\leq 1}_X$.

\end{digression}

\section{Formal geometry}\label{sec:Formal}

In this section, we will review basics of formal geometry developed by Gelfand, Kazhdan and Fuchs (see \cite{GelfandKazhdan}, \cite{GelfandKazhdanFuks}). We will follow \cite{BeKaledin} and base our constructions of various jet bundles by applying Borel construction to the bundle $X_{coor}$ of formal coordinates (which also appears in the context of deformation quantization, see, e.g., \cite{Kontsevich}, \cite{BeKaledin}, \cite{Yekutieli}), thought as a torsor over some proalgebraic group $G\formal$. In particular, we obtain the bundle $X_{exp}$ of formal exponential maps used in \cite{Kapranov} and show that it can be naturally identified with a new defined bundle $X_{conn}$ of jets of holomorphic connections that are flat and torison-free. This reinterpretation of $X_{exp}$ gives a geometric meaning of the formula \eqref{eq:exp_Kahler} in \ref{subsec:Kapranov}, which is close to Kapranov's original approach, and allow us to work in general without the K\"ahler assumption.

We should mention that this section overlaps a lot with the corresponding discussions in \cite{Kapranov}, though we are working in the Dolbeault picture, and we claim no novelty except for the definition of $X_{conn}$.

\subsection{Differential geometry of formal discs}\label{subsec:Formal_discs}

Fix a complex vector space $V$ of dimension $n$. Consider the formal power series algebra
\begin{displaymath}
  \F = \complex \llbracket V^* \rrbracket = \Shat(V^*) = \prod_{i \geq 0} S^i V^*,
\end{displaymath}
that is, the function algebra of the formal neighborhood of $0$ in $V$. It is a complete regular local algebra with a unique maximal ideal $\mathfrak{m}$ consisting of formal power series with vanishing constant term. The associated graded algebra with respect to the $\mathfrak{m}$-filtration is the (uncompleted) symmetric algebra
\begin{displaymath}
  \graded \F = S(V^*) = \bigoplus_{i \geq 0} S^i V^*.
\end{displaymath}
Since we are in the complex analytic situation, we endow $\F$ with the canonical Fr\'echet topology . In algebraic setting, one need to use the $\mathfrak{m}$-adic topology on $\F$. However, the associated groups and spaces in question remain the same, though the topologies on them will be different. Since our arguments work for both Fr\'echet and $\mathfrak{m}$-adic settings, the topology will not be mentioned explicitly unless necessary. We also use $\widehat{V} = \Spf \F$ to denote the formal polydisc, either as a formal analytic space or a formal scheme.

Following the notations in \cite{Kapranov}, we denote by $G\formal = G\formal(V)$ the proalgebraic group of automorphisms of the formal space $\widehat{V}$, and by $J\formal = J\formal(V)$ the normal subgroup consisting of those $\phi \in G\formal$ with tangent map $d_0 \phi = \Id$ at $0$. In other words, $J\formal(V)$ is the kernel of $d_0 : G\formal \to GL_n(V)$. Let $\mathbf{g}\formal = \mathbf{g}\formal(V)$ and $\mathbf{j}\formal = \mathbf{j}\formal(V)$ be the corresponding Lie algebras. The Lie algebra $\mathbf{g}\formal$ can also be interpreted as the Lie algebra of formal vector fields vanishing at $0$, while $\mathbf{j}\formal$ is the Lie subalgebra of formal vector fields with vanishing constant and linear terms. We have decompositions
\begin{displaymath}
  \mathbf{g}\formal = \prod_{i \geq 1} V \otimes S^i V^* = \prod_{i \geq 1} \Hom (V^*, S^i V^*)
\end{displaymath}
and
\begin{displaymath}
  \mathbf{j}\formal = \prod_{i \geq 2} V \otimes S^i V^* = \prod_{i \geq 2} \Hom (V^*, S^i V^*).
\end{displaymath}
Elements of $\mathbf{g}\formal$ and $\mathbf{j}\formal$ acts on $\F = \prod_{i \geq 0} S^i V^*$ as derivations in the obvious manner.

There is an exact sequence of proalgebraic groups
\begin{equation}\label{exsq:progroups}
  1 \to J\formal \to G\formal \to \GL_n(V) \to 1
\end{equation}
which canonically splits if we regard elements of $\GL_n = \GL_n(V)$ as jets of linear transformations on $\widehat{V}$. In other words, $G\formal$ is a semidirect product:
\begin{displaymath}
  G\formal = J\formal \rtimes \GL_n.
\end{displaymath}
So we have a canonical bijection between sets
\begin{equation}\label{eq:J=G/GL}
  q: J\formal \xrightarrow{\simeq} G\formal / \GL_n.
\end{equation}
Moreover, there is a natural left $G\formal$-action on $J\formal$ given by
\begin{equation}\label{eq:actionJ_1}
  (\phi, T) \cdot \varphi = \phi \circ T \circ \varphi \circ T^{-1}, \quad \forall ~ (\phi, T) \in J\formal \rtimes \GL_n = G\formal, ~ \forall ~ \varphi \in J\formal,
\end{equation}
or equivalently,
\begin{equation}\label{eq:actionJ_2}
  \psi \cdot \varphi = \psi \circ \varphi \circ (d_0 \psi)^{-1}, \quad \forall ~ \psi \in G\formal, ~ \forall ~ \varphi \in J\formal,
\end{equation}
which makes $q$ into a $G\formal$-equivariant map. Moreover, an automorphism $\varphi$ in $J\formal$ can also be interpreted as a bijective morphism $\varphi: \widehat{T_0 V} \to \widehat{V}$, where $\widehat{T_0 V}$ is the completion of the tangent space $T_0 V$ at the origin, which is naturally identified with $\widehat{V}$ itself. In addition, $\varphi$ should induce the identity map on tangent spaces of two formal spaces at the origins (which are both equal to $V$). We call such a map $\varphi$ as a \emph{formal exponential map} since it satisfies analogous properties of exponential maps in classical Riemannian geometry. The above $G\formal$-action on $J\formal$ then has a clearer meaning: think of $\psi \in G\formal$ as a change of formal coordinates on $\widehat{V}$ and the image of $\varphi$ under this transformation is $\varphi$ composed with $\psi: \widehat{V} \to \widehat{V}$ and precomposed with the inverse of the linearization of $\psi$ on $\widehat{T_0 V}$. In other words, $J\formal$ is the set of all formal exponential maps $\varphi: \widehat{T_0 V} \to \widehat{V}$, on which the action of $G\formal$ comes from those on $\widehat{V}$ and $\widehat{T_0 V}$ with the latter being induced from the 'linearlization map' $G\formal \to \GL_n$.

To avoid confusion, we denote by
\begin{displaymath}
\F_T := \Shat ((T_0 V)^*) = \Shat (V^*)
\end{displaymath}
the algebra of functions on $\widehat{T_0 V}$ and by $\mathfrak{m}_T$ its maximal ideal, though it is just another copy of $\F$. Having a formal exponential map $\varphi: \widehat{T_0 V} \to \widehat{V}$ is the same as having an isomorphism of algebras
\begin{equation}\label{eq:exp_pullback}
  \varphi^*: \F \to \F_T
\end{equation}
whose induced isomorphism between the associated graded algebras, which are both equal to $S(V^*)$, is the identity. As before, we want to set the $G\formal$-action on $\F_T$ as the one induced from the linear action of $\GL_n$ on $\F_T$ and the projection $G\formal \to \GL_n$.

Now we give a another interpretation of $J\formal$ as a $G\formal$-space. Define $\Conn$ to be the space of all flat torsion-free connections on $\widehat{V}$,
\begin{displaymath}
  \nabla : T\widehat{V} \to T^*\widehat{V} \otimes_{\Osheaf_{\widehat{V}}} T\widehat{V} = V^* \otimes T\widehat{V},
\end{displaymath}
where $T\widehat{V} = \Shat(V^*) \otimes V$ and $T^*\widehat{V} = \Shat(V^*) \otimes V^*$ are (sections of) the tangent bundle and cotangent bundle of $\widehat{V}$ respectively. Most of time we also write $\nabla$ for the induced connections on the cotangent bundle and its tensor bundles, e.g.,
\begin{displaymath}
  \nabla : T^*\widehat{V} \to T^*\widehat{V} \otimes_{\Osheaf_{\widehat{V}}} T^*\widehat{V}.
\end{displaymath}
We identify $T^*\widehat{V} \otimes_{\Osheaf_{\widehat{V}}} T^*\widehat{V}$ with $\Shat(V^*) \otimes V^* \otimes V^*$, of the latter the first and second $V^*$ come from the first and second $T^*\widehat{V}$ of the former respectively. For any complex vector space $V$, there is a standard Euclidean connection $\EuclideanConn$ on $\widehat{V}$ (acting on $T^*\widehat{V})$
\begin{displaymath}
  \EuclideanConn: \prod_{n \geq 0} S^n V^* \otimes V^* \to \prod_{n \geq 1} S^{n-1} V^* \otimes V^* \otimes V^*,
\end{displaymath}
determined by its components
\begin{displaymath}
  (\EuclideanConn)_n: S^n V^* \otimes V^* \to S^{n-1} V^* \otimes V^* \otimes V^*
\end{displaymath}
defined by
\begin{displaymath}
  (\EuclideanConn)_n (v_1 v_2 \cdots v_n \otimes u) = \sum_{i=1}^{n} v_1 \cdots \hat{v}_i \cdots v_n \otimes v_i \otimes u.
\end{displaymath}
Using $\EuclideanConn$ we can rewrite the components of $f$ in the decomposition $\F = \prod_{i \geq 0} S^i V^*$ as
\begin{equation}\label{eq:euclid_conn}
  f = (\EuclideanConn^k f |_0)_{k \geq 0} = (f(0), \EuclideanConn f |_0, \EuclideanConn^2 f |_0, \cdots) \in \F,
\end{equation}
where $\EuclideanConn f = df \in T^*\widehat{V}$ is the usual differential of functions and $\EuclideanConn^k f = \EuclideanConn^{k-1} df$ ($k \geq 2$) while $|_0$ means restriction of the tensors at the origin.

As in classical differential geometry, for each connection $\nabla \in \Conn$ on the formal space $\widehat{V}$ we can assign an formal exponential map $\expmap_\nabla : \widehat{T_0 V} \to \widehat{V}$, which is defined to be the unique map in $J\formal$ such that it pulls back $\nabla$ into the Euclidean connection $\EuclideanConn$ on the completed tangent space, i.e.,
\begin{equation}\label{eq:exp_property}
  \expmap^*_\nabla \nabla = \EuclideanConn.
\end{equation}
To be more explicit, we define $\expmap_\nabla$ by defining its pullback map of functions $\expmap^*_\nabla: \F \to \F_T$:
\begin{equation}\label{eq:exp_definition}
  \expmap^*_\nabla (f) = (\nabla^i f |_0)_{i \geq 0} = (f(0), \nabla f |_0, \nabla^2 f |_0, \cdots) \in \prod_{i \geq 0} S^i (T_0 V)^* = \F_T
\end{equation}
which is analogous to \eqref{eq:euclid_conn}. Here again $\nabla f = df$ and $\nabla^i f = \nabla^{i-1} df$ for $i \geq 2$. Note that to make sure the terms in the expression lie in symmetric tensors one has to resort to the torsion-freeness and flatness of $\nabla$.

On the other hand, any $\phi \in J\formal$ pushs forward the Euclidean connection $\EuclideanConn$ on $\widehat{T_0 V}$ to a flat torsion-free connection $\nabla = \phi_* \EuclideanConn$ on $\widehat{V}$. Thus by the discussion above we obtain a bijection
\begin{equation}\label{eq:Conn=J}
  \exp : \Conn \xrightarrow{\simeq} J\formal.
\end{equation}
Note that $G\formal$ naturally acts on $\Conn$ from left by pushing forward connections via automorphisms of $\widehat{V}$. Recall that we also have a $G\formal$ action on $J\formal$ defined by \eqref{eq:actionJ_2}.

\begin{lemma}
  The map $\exp : \Conn \to J\formal$ defined above is $G\formal$-equivariant.
\end{lemma}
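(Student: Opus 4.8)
The plan is to exploit the characterizing property \eqref{eq:exp_property} of the formal exponential map rather than the explicit formula \eqref{eq:exp_definition}. By definition, $\expmap_\nabla = \exp(\nabla)$ is the \emph{unique} element of $J\formal$ whose pullback carries $\nabla$ to the Euclidean connection, $\expmap^*_\nabla \nabla = \EuclideanConn$. Hence, to establish $\exp(\psi \cdot \nabla) = \psi \cdot \exp(\nabla)$ for every $\psi \in G\formal$ and $\nabla \in \Conn$ — where $\psi \cdot \nabla = \psi_* \nabla$ is the pushforward action on $\Conn$ and $\psi \cdot \varphi$ on $J\formal$ is given by \eqref{eq:actionJ_2} — it suffices to check that the candidate $\psi \cdot \expmap_\nabla = \psi \circ \expmap_\nabla \circ (d_0 \psi)^{-1}$ again pulls the connection $\psi_* \nabla$ back to $\EuclideanConn$; uniqueness then forces it to coincide with $\expmap_{\psi_* \nabla}$. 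First I would record that $\psi \cdot \expmap_\nabla$ indeed lies in $J\formal$ (already guaranteed by \eqref{eq:actionJ_2}) and has the correct source and target $\widehat{T_0 V} \to \widehat{V}$.

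The computation rests on three facts that I would isolate first. (i) Pullback of connections is contravariantly functorial: $(\alpha \circ \beta)^* \nabla = \beta^*(\alpha^* \nabla)$ for composable automorphisms. (ii) The pushforward is pullback along the inverse, $\psi_* \nabla = (\psi^{-1})^* \nabla$, so that $\psi^*(\psi_* \nabla) = (\psi^{-1} \circ \psi)^* \nabla = \nabla$. (iii) The crucial point: the Euclidean connection $\EuclideanConn$ is invariant under pullback by any linear automorphism, $L^* \EuclideanConn = \EuclideanConn$ for $L \in \GL_n(V)$. This is where the special nature of $\EuclideanConn$ enters; it holds because each component $(\EuclideanConn)_n$ is built solely from the symmetric- and tensor-algebra structure of $V^*$ and is therefore natural with respect to linear maps — equivalently, $\EuclideanConn$ is the canonical translation-invariant flat connection, which any linear map preserves. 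I expect fact (iii), together with pinning down the pullback/pushforward conventions exactly, to be the main (though routine) obstacle.

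With these in hand the verification is short. Writing $\phi = \expmap_\nabla$ and using (i),
\[ \big(\psi \circ \phi \circ (d_0\psi)^{-1}\big)^* (\psi_* \nabla) = \big((d_0\psi)^{-1}\big)^* \, \phi^* \, \psi^*(\psi_* \nabla). \]
By (ii) the inner factor $\psi^*(\psi_* \nabla)$ collapses to $\nabla$, and then the defining property \eqref{eq:exp_property} gives $\phi^* \nabla = \EuclideanConn$. Finally, since $(d_0\psi)^{-1} \in \GL_n(V)$, fact (iii) yields $\big((d_0\psi)^{-1}\big)^* \EuclideanConn = \EuclideanConn$. Thus $\psi \cdot \expmap_\nabla$ pulls $\psi_* \nabla$ back to $\EuclideanConn$, and by the uniqueness in \eqref{eq:exp_property} it equals $\expmap_{\psi_* \nabla}$, proving the $G\formal$-equivariance of $\exp$. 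As an alternative, one could instead argue directly on the explicit pullback formula \eqref{eq:exp_definition}, tracking how $\psi^*$ and $\big((d_0\psi)^{-1}\big)^*$ transform the iterated covariant derivatives $(\nabla^i f|_0)_i$; but the argument via the characterizing property is cleaner and avoids the bookkeeping of higher derivatives.
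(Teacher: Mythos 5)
Your argument is correct and is essentially the paper's own proof: the paper simply records the commutative diagram of connection-preserving maps $(\widehat{T_0 V},\partial)\to(\widehat{T_0 V},\partial)\to(\widehat{V},\varphi_*\nabla)$ versus $(\widehat{T_0 V},\partial)\to(\widehat{V},\nabla)\to(\widehat{V},\varphi_*\nabla)$ and appeals to the same uniqueness in \eqref{eq:exp_property}. Your facts (i)--(iii), in particular the $\GL_n(V)$-invariance of $\EuclideanConn$, are exactly what make that diagram a diagram of spaces equipped with connections, so you have just spelled out the details the paper leaves to the reader.
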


\begin{proof}
  One only needs to notice the following commutative diagram of spaces equipped with connections
  \begin{diagram}
    (\widehat{T_0 V}, \partial)            & \rTo^{d_0 \varphi}    & (\widehat{T_0 V}, \partial)   \\
    \dTo^{\expmap_\nabla}                  &                       & \dTo_{\expmap_{\varphi_* \nabla}}  \\
    (\widehat{V}, \nabla)                      & \rTo^{\varphi}        & (\widehat{V}, \varphi_* \nabla)
  \end{diagram}
  which corresponds exactly to \eqref{eq:actionJ_2}.
\end{proof}

All the arguments remain valid for $V^{(r)} = \Spf \F / \mathfrak{m}^r$, the $r$-th order formal neighborhood of $V$, where $\mathfrak{m}$ is the maximal ideal of the local algebra $\F$. One can also define $G^{(r)}$, $J^{(r)}$, etc., and state similar results about them.

\subsection{Bundle of formal coordinates and connections}\label{subsec:Formal_connection}

We now introduce the bundle of formal coordinate systems $p : X_{coor} \to X$ of a smooth complex manifold $X$. By definition (\S 4.4., \cite{Kapranov}), for $x \in X$ the fiber $X_{coord, x}$ is the space of infinite jets of biholomorphisms $\varphi: V \simeq \complex^n \to X$ with $\varphi(0) = x$. Hence $X_{coor}$ is a natural a holomorphic principal $G\formal$-bundle (or a $G\formal$-torsor).

We have a canonical isomorphism between sheaves of algebras
\begin{equation}\label{eq:coord}
  X_{coor} \times_X \mathcal{J}^\infty_X \simeq X_{coor} \times \F.
\end{equation}
over $X_{coor}$. Thus $X_{coor}$ can also be characterized by the following universal property (see \cite{BeKaledin}, \cite{Yekutieli}): given any complex space $S$, a morphism $\eta: S\to X$ and an isomorphism $\zeta : \eta^* \mathcal{J}^\infty_X \simeq \Osheaf_S \hat{\otimes} \F$ of sheaves of topological algebras over $S$, there is a unique morphism $\eta' : S \to X_{coor}$ such that $\eta = p \circ \eta'$ and $\zeta$ is induced from the isomorphism \eqref{eq:coord}. Note that such $\zeta$ does not necessarily exist for arbitrary $\eta: S \to X$, but always does for those $S$ which are Stein.

One can obtain various canonical jet bundles on $X$ by applying the associated bundle construction on principal $G\formal$-bundle $X_{coor}$. For example, $\F$ is naturally a left $G\formal$-module and the corresponding sheaf of algebras associated to the $G\formal$-torsor $X_{coor}$ coincides with the jet bundle of holomorphic functions $\mathcal{J}^\infty_X$:
\begin{displaymath}
  X_{coor} \times_{G\formal} \F \simeq \mathcal{J}^\infty_X
\end{displaymath}
To get the natural flat connection on $\mathcal{J}^\infty_X$, one could adopt the language of Harish-Chandra torsors as in \cite{BeKaledin}. We omit it since this connection will not be used in this paper. Other jet bundles, such as $\mathcal{J}^\infty T_X$, the jet bundle of the tangent bundle, and $\mathcal{J}^\infty T^*_X$, the jet bundle of cotangent bundle, can be obtained in a similar way. Again we refer interested readers to \cite{BeKaledin}.

Another related space which is at the very core of our discussions is the bundle of formal exponential maps introduced in \cite{Kapranov}, which we denote by $X_{exp}$ (some literatures use the notation $X_{aff}$). Each fiber $X_{exp, x}$ of $X_{exp}$ at $x \in X$ is the space of jets of holomorphic maps $\phi: T_x X \to X$ such that $\phi(0) = x$, $d_0 \phi = \Id$. We have a map
\begin{displaymath}
  X_{coor} \to X_{exp}, \quad \phi \mapsto \phi \circ (d_0 \phi)^{-1}
\end{displaymath}
which induce a biholomorphism
\begin{equation}\label{eq:GL=exp}
  X_{coor} / \GL_n \simeq X_{exp}
\end{equation}
On the other hand, we can define the bundle of jets of flat torsion-free connection
\begin{displaymath}
  X_{conn} = X_{coor} \times_{G\formal} \Conn
\end{displaymath}
whose fiber at a given point $x \in X$ consists of all flat torsion-free connections on the formal neighborhood of $x$. By combining the $G\formal$-equivariant bijections \eqref{eq:J=G/GL}, \eqref{eq:Conn=J} and \eqref{eq:GL=exp}, we get
\begin{equation}
  X_{conn} \simeq X_{coor} \times_{G\formal} J\formal \simeq X_{coor} \times_{G\formal} G\formal / \GL_n \simeq X_{exp}.
\end{equation}
In other words, we can naturally identify jet bundle of flat torsion-free connections with the jet bundle of formal exponential maps. From now on, we will not distinguish between these two jet bundles and denote both by $X_{conn}$, though both interpretations will be adopted in the rest of the paper.

\subsection{Tautological exponential map}\label{subsec:Formal_exp}

By definition of $X_{conn}$, there is a tautological flat and torsion-free connection over $X_{conn}$,
\begin{displaymath}
  \nabla_{tau} : \pi^* \mathcal{J}^\infty T^*X \to \pi^* \mathcal{J}^\infty T^*X \otimes_{\pi^* \mathcal{J}^\infty_X} \pi^* \mathcal{J}^\infty T^*X,
\end{displaymath}
which is $\Osheaf_{X_{coor}}$-linear yet satisfies the Leibniz rule with respect to the differential
\begin{displaymath}
  \widetilde{d}\formal: \pi^* \mathcal{J}^\infty_X \to \pi^* \mathcal{J}^\infty T^*X
\end{displaymath}
that is the pullback of
\begin{displaymath}
  d\formal: \mathcal{J}^\infty_X \to \mathcal{J}^\infty T^*X.
\end{displaymath}
Here $d\formal$ is the $\Osheaf_X$-linear differential obtained by applying the Borel construction with $X_{coor}$ and the differential $d : \Osheaf_{\widehat{V}} \to T^*\widehat{V}$ on the formal disc.

On the other hand, since $X_{conn}$ can also be interpreted as the bundle of formal exponential maps, we have a tautological isomorphism between sheaves of algebras over $X_{conn}$,
\begin{equation}\label{eq:Exp_definition}
  \Expmap^*: \pi^* (X_{coor} \times_{G\formal} \F) \to \pi^* (X_{coor} \times_{G\formal} \F_T),
\end{equation}
induced from $\expmap^*$ in \eqref{eq:exp_definition} via the Borel construction. The domain of $\Expmap^*$ is identified with $\pi^* \JetX$ or $\pi^* \Osheaf_{\Xdiag}$, while for the codomain we have
\begin{displaymath}
  X_{coor} \times_{G\formal} \F_T \simeq X_{coor} \times_{G\formal} \GL_n \times_{\GL_n} \F_T \simeq X_{coor} / J\formal \times_{\GL_n} \F_T
\end{displaymath}
by our definition of the $G\formal$-action on $\F_T$. But the principal $\GL_n$-bundle $X_{coor} / J\formal$ is exactly the bundle of ($0$th-order) frames on $X$, so
\begin{displaymath}
  X_{coor} / J\formal \times_{\GL_n} V \simeq TX.
\end{displaymath}
Since the $\GL_n$ action respects the decomposition $\F_T = \prod_{i \geq 0} S^i V^*$, we get
\begin{displaymath}
  X_{coor} / J\formal \times_{\GL_n} \F_T \simeq \prod_{i \geq 0} S^i T^*X = \Shat(T^*X),
\end{displaymath}
which is the structure sheaf of $X\formal_{TX}$, the formal neighborhood of the zero section of $TX$. In short, we have a \emph{tautological exponential map}
\begin{displaymath}
  \Expmap: X_{conn} \times_X X\formal_{TX} \to X_{conn} \times_X \Xdiag
\end{displaymath}
or equivalently, an isomorphism of bundles of topological algebras
\begin{displaymath}
  \Expmap^*: \pi^* \Osheaf_{\Xdiag} \to \pi^* \Osheaf_{X\formal_{TX}}.
\end{displaymath}
which we might call as the \emph{tautological Taylor expansion map}. Moreover, the map induced by $\Expmap^*$ between associated bundle of graded algebras
\begin{displaymath}
  \graded \Expmap^* : \pi^* \graded \Osheaf_{\Xdiag} = \pi^* S(T^*X) \to \pi^* S(T^*X)
\end{displaymath}
is the identity map. In virtue of \eqref{eq:exp_definition}, $\Expmap^*$ can be written in terms of $\nabla_{tau}$:
\begin{equation}
  \Expmap^* (f) = (\nabla_{tau}^i f |_0)_{i \geq 0} = (f(0), \nabla_{tau} f |_0, \nabla^2_{tau} f |_0, \cdots) \in \pi^* \Shat(T^*X)
\end{equation}
where the 'restriction to the origin' map $\pi^* S^i \mathcal{J}^\infty T^*X \to \pi^* S^i T^*X$ comes from the local restriction map $T^*\widehat{V} \to T^*_0 \widehat{V} = V^*$ by applying Borel construction with $X_{coor}$ and then pulling back to $X_{conn}$ via $\pi$. Again $\nabla_{tau} f$ means $d\formal f$ and so on.

\begin{remark}\label{rmk:Jformal}
Note that there is no natural $G\formal$- or $J\formal$-action on $X_{exp}=X_{coor}$. Yet it is a torsor over the proalgebraic group bundle $J\formal(TX)$, whose fiber over $x \in X$ is the group of jets of biholomorphisms $\varphi: T_x X \to  T_x X$ with $\varphi(0)=0$, $d_0\varphi = \Id$. Indeed, consider the proalgebraic group $\Aut_0 \widehat{T_0 V}$ of automorphisms of $\widehat{T_0 V}$ whose tangent maps are the identity. $\Aut_0 \widehat{T_0 V}$ can be identified with $J\formal$ as sets, yet we endow it with a different $G\formal$-action which is the conjugation of the one on $\widehat{T_0 V}$ induced from $G\formal \to \GL_n$. $\Aut_0 \widehat{T_0 V}$ acts on $J\formal$ from right by precomposition and the action is compatible with the $G\formal$-actions. Finally, notice that
\begin{displaymath}
  J\formal(TX) \simeq X_{coor} \times_{G\formal} \Aut_0 \widehat{T_0 V}.
\end{displaymath}
Let $\mathbf{j}\formal(TX)$ be the bundle of Lie algebras associated to $J\formal(TX)$. We have a natural splitting
\begin{equation}\label{eq:j_comp}
  \mathbf{j}\formal(TX) = \prod_{i \geq 2} \Hom(S^i TX,TX) = \prod_{i \geq 2} \Hom(T^*X, S^i T^*X).
\end{equation}
\end{remark}

\begin{remark}
All the jet bundles we are discussing here are holomorphic and although in general they might not admit global holomorphic sections, there always exist global smooth sections. For example, let us consider the fiber bundle $\pi_n: X_{exp}^{(n)}(X) \to X$ of `$n$-th order exponential maps', cf. \S 4.2., \cite{Kapranov}. By definition, for each $x \in X$ the fiber $X_{exp, x}^{(n)}$ is the space of $n$-th order jets of holomorphic maps $\phi : T_x X \to X$ such that $\phi(0) = x$, $d_0 \phi = \Id$. Thus we we have a chain of projections
\begin{equation}\label{diag:Phi}
  X \leftarrow X_{exp}^{(2)} \leftarrow X_{exp}^{(3)} \leftarrow \cdots.
\end{equation}
Each $X_{exp}^{(n+1)}$ is an affine bundle over $X_{exp}^{(n)}$ whose associated vector bundle is $$\pi_n^* \Hom(S^{n+1}TX, TX),$$ so any section of $X_{exp}^{(n)}$ can be lifted to a smooth section of the next bundle in the diagram. The inverse limit of the diagram is exactly the bundle $\pi: X_{exp} \to X$. Thus $X_{exp}$ admits global smooth sections. Note that in picture of connections, $X_{exp}^{(2)} = X_{conn}^{(2)}$ can be regarded as the bundle of torsion-free connections (see Section 2.2., \cite{Kapranov}).
\end{remark}

\subsection{Kapranov's theorem revisited}\label{subsec:Kapranov_revisited}

We now show how to implement the formal analysis from previous sections to prove Kapranov's theorem, in a more general form. Given any smooth section $\sigma$ of $X_{conn}$, it induces a smooth homomorphism via the tautological Taylor expansion map $\Expmap^*$:
\begin{displaymath}
  \expmap^*_{\sigma} : \JetX \to \Shat(T^*X),
\end{displaymath}
between Fr\'echet bundles of algebras over $X$. It is holomorphic if and only if $\sigma$ is holomorphic. Conversely, $X_{conn}$ satisfies the universal property similar to that of $X_{coor}$. Namely, given a holomorphic (resp. smooth) map $\eta: S \to X$, any holomorphic (resp. smooth) isomorphism $\zeta : \eta^* \JetX \to \eta^* \Shat(T^*X)$ of sheaves of $\Osheaf_S$-modules, which induces the identity map on the associated graded algebras
\begin{displaymath}
  \graded \JetX = S(T^*X) = \graded \Shat(T^*X),
\end{displaymath}
arises from $\Expmap^*$ and a unique holomorphic (resp. smooth) section $\eta': S \to X_{conn}$ such that $\eta = \pi \circ \eta'$.

In particular, global smooth sections of $X_{conn}$ correspond in a $1-1$ manner to all possible smooth isomorphisms between $\JetX$ and $\Shat(T^*X)$ which induce the identity map on the associated graded algebras. As a holomorphic principle $J\formal(TX)$-bundle (see Remark \ref{rmk:Jformal}), $X_{conn}$ carries a flat $(0,1)$-connection $\overline{d}$, such that for any given smooth section $\sigma$ of $X_{conn}$, its anti-holomorphic differential
\begin{equation}\label{eq:omega}
  \omega_\sigma := \overline{d} \sigma \in \A^{0,1}(\mathbf{j}\formal(TX))
\end{equation}
is well-defined and satisfies the Maurer-Cartan equation
\begin{equation}\label{eq:MCeqn_omega}
  \dbar \omega_\sigma + \frac{1}{2} [\omega_\sigma, \omega_\sigma] = 0.
\end{equation}
Decompose $\omega$ as in \eqref{eq:j_comp}, we get $(0,1)$-forms
\begin{displaymath}
  \alpha_\sigma^n \in \A^{0,1}_X(\Hom(S^n TX, TX)) = \A^{0,1}_X(\Hom(T^*X, S^n T^*X)).
\end{displaymath}
Moreover, one can extend the map $\expmap^*_\sigma$ linearly with respect to the $\AOD(X)$-actions to a homomorphism between graded algebras
\begin{equation}
  \expmap^*_\sigma : \AOD_X(\JetX) \to \AOD_X(\Shat(T^*X)).
\end{equation}
In light of Proposition \ref{prop:isom_diag_dga}, we abuse our notations and still use $\expmap^*_\sigma$ for the composition $\expmap^*_\sigma \circ (I_\infty)^{-1}$, so that we have
\begin{equation}\label{eq:expmap}
  \expmap^*_\sigma : \A^\bullet(X\formal_{X \times X}) \to \AOD_X(\Shat(T^*X)).
\end{equation}
However, the map \eqref{eq:expmap} does not need to commute with the $\dbar$-differentials on both sides. The deficiency is measured exactly by $\omega_\sigma$, that is,
\begin{equation}\label{eq:omega_exp}
  \omega_\sigma = (\dbar \expmap^*_\sigma) \circ (\expmap^*_\sigma)^{-1},
\end{equation}
where $\dbar \expmap^*_\sigma = [\dbar, \expmap^*_\sigma]$. This suggests that we can correct the usual holomorphic structure on $\Shat(T^*X)$ to make $\expmap^*_\sigma$ into a map of dgas. Let $\widetilde{\omega}$ and $\widetilde{\alpha}_\sigma^n$ be the odd derivations of the graded algebra $\AOD(\Shat(T^*X))$ induced by $\omega$ and $\alpha_\sigma^n$ respectively. Define a new differential $D_\sigma = \dbar - \widetilde{\omega} = \dbar - \sum_{n \geq 2} \widetilde{\alpha}_\sigma^n$, then $D^2_{\sigma} = 0$, which is equivalent to  then the Maurer-Cartan equation \eqref{eq:MCeqn_omega}. We have the following generalization of Theorem \ref{thm:Kapranov_Diagonal}:

\begin{proposition}\label{prop:exponential_map}
The Taylor expansion map with respect to any given smooth section $\sigma$ of $X_{conn}$
\begin{displaymath}
  \expmap^*_\sigma : (\A^\bullet(X\formal_{X \times X}),\dbar) \to (\AOD(\Shat(T^*X)), D_\sigma)
\end{displaymath}
is an isomorphism of dgas. 
\end{proposition}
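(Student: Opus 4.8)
The statement packages two independent assertions: that $\expmap^*_\sigma$ is an isomorphism of graded algebras, and that it carries $\dbar$ to $D_\sigma$. The first is essentially free. Since $\expmap^*_\sigma = \sigma^*\Expmap^*$ is the pullback along $\sigma$ of the tautological algebra isomorphism \eqref{eq:Exp_definition}, it is an $\AOD(X)$-linear homomorphism of filtered graded algebras, and the map it induces on associated graded algebras is the identity of $\AOD_X(S(T^*X))$, because $\graded\Expmap^*$ is the identity. A filtered homomorphism between complete filtered algebras that is an isomorphism on the associated graded is itself an isomorphism, exactly as in the discussion preceding Theorem \ref{thm:Kapranov_Diagonal}. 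So the entire content of the proposition lies in the compatibility with the differentials.

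The plan is to reduce this compatibility to the single operator identity \eqref{eq:omega_exp}, and then to establish the latter through the formal-geometric analysis of Section \ref{sec:Formal}. Granting \eqref{eq:omega_exp}, I would argue purely formally: since $\expmap^*_\sigma$ has even degree, $\dbar\expmap^*_\sigma = [\dbar,\expmap^*_\sigma] = \dbar\circ\expmap^*_\sigma - \expmap^*_\sigma\circ\dbar$, so composing \eqref{eq:omega_exp} on the right with $\expmap^*_\sigma$ yields $\widetilde\omega\circ\expmap^*_\sigma = \dbar\circ\expmap^*_\sigma - \expmap^*_\sigma\circ\dbar$, whence
\begin{displaymath}
  D_\sigma\circ\expmap^*_\sigma = (\dbar - \widetilde\omega)\circ\expmap^*_\sigma = \expmap^*_\sigma\circ\dbar.
\end{displaymath}
This is exactly the chain-map property, so together with the first paragraph $\expmap^*_\sigma$ becomes an isomorphism of dgas; the relation $D_\sigma^2 = 0$ then follows either by transporting $\dbar^2 = 0$ across the isomorphism or, equivalently, from the Maurer-Cartan equation \eqref{eq:MCeqn_omega}.

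The substance of the argument, and the step I expect to be the main obstacle, is the derivation of \eqref{eq:omega_exp}. Here I would exploit that the tautological Taylor expansion map $\Expmap^*$ is holomorphic over $X_{conn}$: viewing $X_{conn}$ as a holomorphic principal $J\formal(TX)$-bundle (Remark \ref{rmk:Jformal}) equipped with its flat $(0,1)$-connection $\overline{d}$, the map $\Expmap^*$ is covariantly constant, while the Lie algebra bundle $\mathbf{j}\formal(TX) = \prod_{i\geq 2}\Hom(T^*X, S^i T^*X)$ of \eqref{eq:j_comp} acts on $\Shat(T^*X)$ by derivations. Writing $\expmap^*_\sigma = \sigma^*\Expmap^*$ and applying $\dbar$, the chain rule for pullback along the smooth (not necessarily holomorphic) section $\sigma$ shows that the anti-holomorphic deficiency of $\expmap^*_\sigma$ is governed entirely by the anti-holomorphic variation $\overline{d}\sigma = \omega_\sigma$ of the section, the contribution of $\Expmap^*$ itself vanishing by its holomorphicity. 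Unwinding the Borel construction then identifies this deficiency, as an operator on $\AOD_X(\Shat(T^*X))$, with the derivation $\widetilde\omega$ induced by $\omega_\sigma$, which is precisely \eqref{eq:omega_exp}. The delicate points are making rigorous the sense in which $\Expmap^*$ is $\overline{d}$-flat and tracking how the Maurer-Cartan form $\omega_\sigma$ gets converted into the correcting derivation. As a consistency check I would specialize to the section $\sigma = [\nabla]_\infty$ of the Levi-Civita connection, where one should find $\widetilde\omega = -\sum_{n\geq 2}\tilde{R}_n$, so that $D_\sigma = \dbar + \sum_{n\geq 2}\tilde{R}_n = D$ and the proposition recovers Theorem \ref{thm:Kapranov_Diagonal}.
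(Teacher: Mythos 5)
Your proposal is correct and follows essentially the same route as the paper: the paper offers no separate proof of this proposition, deducing it directly from the identity \eqref{eq:omega_exp} together with the definition $D_\sigma = \dbar - \widetilde{\omega}$ exactly as you do, with the graded-algebra isomorphism obtained from the fact that $\expmap^*_\sigma$ induces the identity on associated graded pieces. Your sketch of why \eqref{eq:omega_exp} holds (holomorphicity of $\Expmap^*$ over the $J\formal(TX)$-torsor $X_{conn}$ plus the chain rule applied to $\sigma^* \Expmap^*$) is consistent with, and somewhat more explicit than, the paper's own unproved assertion of that formula.
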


If we denote the value of $\sigma$ at each point $x \in X$ as $\nabla_x$, where $\nabla_x$ is a flat torsion-free connection of $T^*X$ on the formal neighborhood of $x$ in $X$, then the map $\expmap^*_\sigma$ regarded as a homomorphism between sheaves of smooth sections of $\JetX$ and $\Shat(T^*X)$ can be formally expressed as
\begin{equation}\label{eq:exp_formal}
  \expmap^* ([f]_\infty) |_x= (f |_x, (\partial f) |_x, (\nabla^2_x f) |_x, \cdots, (\nabla^n_x f)|_x, \cdots),
\end{equation}
where $|_x$ means taking the value at $x$ of the formal tensors fields on the formal neighborhood of $x$. We can also write down formula for each $\alpha^n_\sigma$ of similar form as that of $R_n$ in Theorem \ref{thm:Kapranov_Diagonal}. Namely, if we regard $\sigma$ as a section of the affine bundle $X^{(2)}_{conn}$ via the projection $X_{conn} \to X^{(2)}_{conn}$, then $\alpha_2 = \dbar \sigma$ and the values of $\alpha^n$ $(n > 2)$ at each point $x$ is given by
\begin{displaymath}
  \alpha_n |_x = (\nabla^{n-2}_x \alpha_2) |_x.
\end{displaymath}

\begin{remark}
  If $X$ is K\"ahler, we can take $\sigma$ to be the holomorphic jets of the canonical $(1,0)$-connection $\nabla$. The resulting map $\expmap^*_\sigma$ from \eqref{eq:exp_formal} is exactly the pullback map of Kapranov's exponential map. One should compare \eqref{eq:exp_formal} with \eqref{eq:exp_Kahler} which we used to prove Theorem \ref{thm:Kapranov_Diagonal} and notice that the symbols $\nabla$ in these two formulae have \emph{different} meanings! The $\nabla$ in the former means the holomorphic jets of the actual connection, which are holomorphic on the formal neighborhood of each point $x \in X$, while in the latter the actual connection is used but it is not holomorphic. In fact, however, \eqref{eq:exp_Kahler} only cares about the holomorphic jets of the actual connection $\nabla$, since it is a $(1,0)$-connection! So the two pullback maps $\expmap^*$ \emph{are} the same, yet not for the wrong reason that they look like the same.
\end{remark}

\bibliographystyle{halpha}
\bibliography{GeometryFormal_I_bib}

\begin{thebibliography}{GKF72}

\bibitem[AC12]{ArinkinCaldararu}
D.~Arinkin and A.~C{\u{a}}ld{\u{a}}raru.
\newblock When is the self-intersection of a subvariety a fibration?
\newblock {\em Adv. Math.}, 231(2):815--842, 2012.

\bibitem[Ati57]{Atiyah}
M.~F. Atiyah.
\newblock Complex analytic connections in fibre bundles.
\newblock {\em Trans. Amer. Math. Soc.}, 85:181--207, 1957.

\bibitem[BK04]{BeKaledin}
R.~Bezrukavnikov and D.~Kaledin.
\newblock Fedosov quantization in algebraic context.
\newblock {\em Mosc. Math. J.}, 4(3):559--592, 782, 2004.

\bibitem[Blo10]{Block1}
J.~Block.
\newblock Duality and equivalence of module categories in noncommutative
  geometry.
\newblock In {\em A celebration of the mathematical legacy of {R}aoul {B}ott},
  volume~50 of {\em CRM Proc. Lecture Notes}, pages 311--339. Amer. Math. Soc.,
  Providence, RI, 2010.

\bibitem[Cal12]{Calaque}
D.~Calaque.
\newblock A {PBW} theorem for inclusions of (sheaves of) {L}ie algebroids.
\newblock 2012, arXiv:1205.3214.

\bibitem[CCT11]{CalaqueCaldararuTu}
D.~Calaque, A.~C{\u{a}}ld{\u{a}}raru, and J.~Tu.
\newblock {P}bw for an inclusion of {L}ie algebras.
\newblock 2011, arXiv:1010.0985.

\bibitem[CSX12]{ChenStienonXu}
Z.~Chen, M.~Sti\'enon, and P.~Xu.
\newblock From {A}tiyah classes to homotopy {L}eibniz algebras.
\newblock 2012, arXiv:1204.1075v2 [math.DG].

\bibitem[GK71]{GelfandKazhdan}
I.~M. Gelfand and D.~A. Kazhdan.
\newblock Certain questions of differential geometry and the computation of the
  cohomologies of the {L}ie algebras of vector fields.
\newblock {\em Soviet Math. Dokl}, 12:1367--1370, 1971.

\bibitem[GKF72]{GelfandKazhdanFuks}
I.~M. Gelfand, D.~A. Kazhdan, and D.~B. Fuks.
\newblock Actions of infinite-dimensional {L}ie algebras.
\newblock {\em Funct. Anal. Appl.}, 6(1):9--13, 1972.

\bibitem[Kap99]{Kapranov}
M.~Kapranov.
\newblock Rozansky-{W}itten invariants via {A}tiyah classes.
\newblock {\em Compositio Math.}, 115(1):71--113, 1999.

\bibitem[Kap07]{FreeAlgebroid}
M.~Kapranov.
\newblock Free {L}ie algebroids and the space of paths.
\newblock {\em Selecta Math. (N.S.)}, 13(2):277--319, 2007.

\bibitem[Kon03]{Kontsevich}
M.~Kontsevich.
\newblock Deformation quantization of {P}oisson manifolds.
\newblock {\em Lett. Math. Phys.}, 66(3):157--216, 2003.

\bibitem[Lur]{Lurie}
J.~Lurie.
\newblock Derived algebraic geometry.
\newblock http://www.math.harvard.edu/~lurie/.

\bibitem[Yek05]{Yekutieli}
A.~Yekutieli.
\newblock Deformation quantization in algebraic geometry.
\newblock {\em Adv. Math.}, 198(1):383--432, 2005.

\bibitem[Yu12]{DolbeaultDGA}
S.~Yu.
\newblock Dolbeault dga of a formal neighborhood.
\newblock 2012, arXiv:1206.5155 [math.AG].

\end{thebibliography}

\end{document}